\documentclass[amsfonts,12pt]{amsart}

\usepackage{epsfig}

\usepackage{amssymb}
\usepackage{graphicx}
\usepackage{stmaryrd}
\usepackage{color}
\usepackage{pifont}
\usepackage{enumerate}

\textwidth6.6truein \oddsidemargin-.5truecm \evensidemargin-.5truecm

\newcommand{\N}{{\mathbb N}}
\newcommand{\Q}{{\mathbb Q}}
\newcommand{\R}{{\mathbb R}}

\newcommand{\Sn}{S^{n-1}}

\newtheorem{thm}{Theorem}[section]

\newtheorem{lem}[thm]{Lemma}
\newtheorem{cor}[thm]{Corollary}
\newtheorem{rem}[thm]{Remark}

\newtheorem{prob}[thm]{Problem}

\newcommand{\relbd}{{\mathrm{relbd}}\,}
\newcommand{\cl}{{\mathrm{cl}}\,}

\newcommand{\lin}{{\mathrm{lin}}\,}

\makeatletter
\@namedef{subjclassname@2020}{\textup{2020} Mathematics Subject Classification}
\makeatother

\begin{document}
\hfill\today
\bigskip

\title{Full rotational symmetry from reflections or rotational symmetries in finitely many subspaces}
\author[Gabriele Bianchi, Richard J. Gardner, and Paolo Gronchi]
{Gabriele Bianchi, Richard J. Gardner, and Paolo Gronchi}
\address{Dipartimento di Matematica e Informatica ``U. Dini", Universit\`a di Firenze, Viale Morgagni 67/A, Firenze, Italy I-50134} \email{gabriele.bianchi@unifi.it}
\address{Department of Mathematics, Western Washington University,
Bellingham, WA 98225-9063} \email{richard.gardner@wwu.edu}
\address{Dipartimento di Matematica e Informatica ``U. Dini", Universit\`a di Firenze, Piazza Ghiberti 27, Firenze, Italy I-50122} \email{paolo.gronchi@unifi.it}
\thanks{First and third author supported in part by the Gruppo
Nazionale per l'Analisi Matematica, la Probabilit\`a e le loro
Applicazioni (GNAMPA) of the Istituto Nazionale di Alta Matematica (INdAM).  Second author supported in
part by U.S.~National Science Foundation Grant DMS-1402929.}
\subjclass[2020]{20F55, 22E15, 51F15, 51F25, 52A20} \keywords{reflection, Coxeter group, rotational symmetry, body of revolution}

\maketitle
\pagestyle{myheadings}
\markboth{GABRIELE BIANCHI, RICHARD J. GARDNER, AND PAOLO GRONCHI}{FULL ROTATIONAL SYMMETRY FROM REFLECTIONS OR ROTATIONAL SYMMETRIES}

\begin{abstract}
Two related questions are discussed.  The first is when reflection symmetry in a finite set of $i$-dimensional subspaces, $i\in \{1,\dots,n-1\}$, implies full rotational symmetry, i.e., the closure of the group generated by the reflections equals $O(n)$. For $i=n-1$, this has essentially been solved by Burchard, Chambers, and Dranovski, but new results are obtained for $i\in \{1,\dots,n-2\}$.  The second question, to which an essentially complete answer is given, is when (full) rotational symmetry with respect to a finite set of $i$-dimensional subspaces, $i\in \{1,\dots,n-2\}$, implies full rotational symmetry, i.e., the closure of the group generated by all the rotations about each of the subspaces equals $SO(n)$.  The latter result also shows that a closed set in $\R^n$ that is invariant under rotations about more than one axis must be a union of spheres with their centers at the origin.
\end{abstract}

\section{Introduction}
The question of when symmetry in a restricted set of subspaces implies full rotational symmetry arises naturally in several contexts.  Radial functions are particularly amenable to analysis, and a function on $\R^n$ is radial if and only if it is symmetric with respect to reflection in any hyperplane through the origin.  A basic geometrical or mechanical problem asks whether a body that is rotationally symmetric in more than one axis must be a ball.  Our motivation comes from the widely applied method of Steiner symmetrization, in which iterated Steiner symmetrals of a compact set, if taken with respect to a suitable sequence of hyperplanes, converge to a ball.  For more information about the latter, see \cite{BGG2}, where we find use for all the main results obtained here.

Let $i\in \{1,\dots,n-1\}$.  We say that a set $G$ of reflections in $i$-dimensional subspaces of $\R^n$ generates $O(n)$ if the closure of the group $\langle G\rangle$ generated by $G$ equals $O(n)$.  For the applications we have in mind, it is particularly desirable to find finite sets $G$ with this property, and this is the focus of most of the paper, in Section~3.  In Section~4, we address the corresponding question as to when $G$ generates $SO(n)$ if $G$ is the set of all the rotations about each of several subspaces, each with dimension between $1$ and $n-2$.

In the case of reflections, prior work mainly addresses the case when $i=n-1$, i.e., of reflections in hyperplanes through the origin.  When $G$ is infinite, Eaton and Perlman \cite{EP} proved that $G$ generates $O(n)$ if it is irreducible, i.e., there is no nontrivial proper subspace $S$ of $\R^{n}$ such that $gS\subset S$ for $g\in G$, and it is clear that this sufficient condition is also necessary.  The problem for arbitrary sets $G$ was solved by Burchard, Chambers, and Dranovski \cite{BCD}. In \cite[Proposition~4.2]{BCD}, they show that for a set of reflections to generate $O(n)$, it suffices that the unit normal vectors to the corresponding hyperplanes satisfy three conditions: (i) at least two are at an angle that is an irrational multiple of $\pi$; (ii) they span $\R^n$; and (iii) they cannot be partitioned into two mutually orthogonal subsets. As these authors point out on \cite[p.~1189]{BCD}, the first condition can be replaced by (i$'$): the set of reflections do not belong a finite Coxeter group.  The latter is weaker than (i), since, when $n>2$, it is enough that at least two unit normal vectors are at an angle which is not an integer multiple of $\pi/3$, $\pi/4$, or $\pi/5$.  Conditions (i$'$), (ii), and (iii) are necessary as well as sufficient.

We obtain the following results for reflections.  Theorem~\ref{teo:sphericalsym_lines} deals with the relatively simple case when $i=1$, and proves that the conditions (i), (ii), and (iii) stated above suffice, where the unit vectors are now taken parallel to the subspaces.  When $i\in \{2,\dots,n-2\}$, the situation is inherently more complicated.  One of the main ingredients in our approach is the quite difficult but general Lemma~\ref{extension_lemma}, which we call the symmetry extension lemma.  This assumes that $H$ and $L$ are subspaces of $\R^n$ such that $H \cap L^\perp=\{o\}$ and $\dim H < \dim L$, and that $G$ is a closed subgroup of $O(n)$ that acts transitively on $S^{n-1}\cap (L+x)$ for each $x\in S^{n-1}$.  (Here $+$ denotes the Minkowski or vector sum; see Section~\ref{subsec:notations} for this and other notation and terminology.) The conclusion is that the closure of the subgroup generated by $G$ and the reflection $R_H$ in $H$ acts transitively on $S^{n-1}\cap (H+L+x)$ for each $x\in S^{n-1}$. Using the symmetry extension lemma and other tools such as Kronecker's approximation theorem, we are able in Theorem~\ref{finthm} to provide sufficient conditions for a finite set of reflections in $i$-dimensional subspaces to generate $O(n)$. In Corollary~\ref{cor21June}, we conclude that if
$$
k=
\begin{cases}
n, & \text{if $i=1$ or $i=n-1$,}\\
\lceil n/i\rceil+1, & \text{if $1<i\leq n/2$,}\\
\lceil n/(n-i)\rceil+1, & \text{if $n/2\leq i<n-1$,}
\end{cases}
$$
then there are $i$-dimensional subspaces $H_1,\dots,H_k$, such that the reflections $R_{H_1},\dots,R_{H_k}$ generate $O(n)$, or, equivalently, are
such that if $E\subset \Sn$ is nonempty, closed, and satisfies $R_{H_j} E=E$ for $j=1,\dots, k$, then $E=\Sn$.  This in turn implies that if $K$ is a convex body in $\R^n$ satisfying $R_{H_j}K=K$ for $j=1,\dots, k$, then $K$ is a ball with center at the origin.

The previous result prompts the question as to what happens if reflections in subspaces are replaced by (full) rotational symmetries in subspaces, and we answer this in Section~4.  (See \cite{KK} for a result in $\R^3$ in the same spirit.)  There are easy examples of nonempty, closed, proper subsets of $S^{n-1}$ that are left invariant under all rotations about more than one subspace; for example, the Cartesian product in $\R^4$ of two orthogonal copies of the circle with center at the origin and radius $1/\sqrt{2}$ is invariant under any rotation about each of the two-dimensional subspaces orthogonal to the circles.  Similarly, the Cartesian product in $\R^4$ of two orthogonal disks centered at the origin (a so-called duocylinder) is a nonspherical convex body of revolution about each of the two-dimensional subspaces orthogonal to the disks. Theorem~\ref{new} states a necessary and sufficient pair of conditions for subspaces $H_1,\dots,H_k$ with $1\le \dim H_j\le n-2$ to be such that if a nonempty closed subset of $S^{n-1}$ is invariant under any rotation about each $H_j$, $j=1,\dots,k$, then $E=S^{n-1}$. The latter is equivalent to saying that a closed set in $\R^n$ that is invariant under rotations that fix $H_j$, $j=1,\dots,k$, must be a union of spheres with their centers at the origin, or to saying that if a convex body $K$ in $\R^n$ is rotationally symmetric in $H_j$, $j=1,\dots,k$, then $K$ must be a ball with center at the origin.  The conditions are (a) $H_1^{\perp}+\cdots+H_k^{\perp}=\R^n$ and (b) $\{H_1^{\perp},\dots,H_k^{\perp}\}$ cannot be partitioned into two mutually orthogonal nonempty subsets.   Corollary~\ref{cor22June} draws the conclusion that if $1\leq i\leq n-2$ and $k=\lceil n/(n-i)\rceil$, then there are $i$-dimensional subspaces
$H_1,\dots, H_k$, that satisfy (a) and (b).  Corollary~\ref{corSep26} shows that among closed sets in $\R^n$, $n\ge 3$, only unions of spheres with their centers at the origin are invariant under rotations about two different axes.

We are very grateful to Fabio Podest\`a for valuable discussions, and to a referee who suggested extracting the present paper from the original version of \cite{BGG2} and whose thorough reading and extensive knowledge led to many improvements.

\section{Preliminaries}\label{subsec:notations}

As usual, $S^{n-1}$ denotes the unit sphere and $o$ the origin in Euclidean $n$-space $\R^n$ with Euclidean norm $\|\cdot\|$.  We assume throughout that $n\ge 2$.   The term {\em ball} in $\R^n$ will always mean an $n$-dimensional ball unless otherwise stated. The unit ball in $\R^n$ will be denoted by $B^n$. If $x,y\in \R^n$, we write $x\cdot y$ for the inner product.

Let $X$ and $Y$ be sets in $\R^n$.  We denote by $\lin X$ and $\cl X$ the {\it linear hull} and {\it closure} of $X$, respectively.  If $t\in \R$,
then $tX=\{tx:x\in X\}$ is the {\em dilate} of $X$ by the factor $t$ and
$$X+Y=\{x+y: x\in X, y\in Y\}$$
denotes the {\em Minkowski sum} of $X$ and $Y$.

If $x\in \R^n\setminus\{o\}$, then $x^{\perp}$ is the $(n-1)$-dimensional subspace orthogonal to $x$. Throughout the paper, the term {\em subspace} means a linear subspace.   If $H$ is a subspace of $\R^n$, then $\dim H$ is its {\em dimension}, $X|H$ is the (orthogonal) projection of a set $X$ on $H$, and $x|H$ is the projection of a vector $x\in \R^n$ on $H$.

The Grassmannian of $k$-dimensional subspaces in $\R^n$ is denoted by ${\mathcal{G}}(n,k)$.

As usual, $O(n)$ and $SO(n)$ denote the {\em orthogonal group} and {\em special orthogonal group}, respectively, of isometries of $\R^n$. If $\phi$ is an isometry and $H$ is a subspace of $\R^n$, we say that $\phi$ {\em fixes} $H$ (or $H$ is {\em fixed} by $\phi$) if $\phi$ acts as the identity on $H$.  (Note the difference between this and invariance; as usual, we say that a set $X$ is {\em invariant} under $\phi$ if $\phi X = X$.)  By $O(n)_H$ (or $SO(n)_H$) we mean the {\em pointwise stabilizer} of $H$, i.e. the subgroup of isometries in $O(n)$ (or $SO(n)$, respectively), that fix $H$.  If $G$ is a subgroup of $O(n)$ and $x\in\Sn$, we denote by $G(x)=\{\phi x: \phi\in G\}$ the {\em orbit} of $x$.

If $H$ is a subspace of $\R^n$, we write $R_HX$ for the {\em reflection} of $X$ in $H$, i.e., the image of $X$ under the map that takes $x\in \R^n$ to $2(x|H)-x$.  If $R_HX=X$, we say $X$ is {\em $H$-symmetric}. If $H=\{o\}$, we instead write $-X=(-1)X$ for the reflection of $X$ in the origin and {\it $o$-symmetric} for $\{o\}$-symmetric. A set $X$ is called {\em rotationally symmetric} with respect to the $i$-dimensional subspace $H$ if for all $x\in H$, $X\cap (H^{\perp}+x)$ is a union of $(n-i-1)$-dimensional spheres, each with center at $x$.  These are precisely the sets that are invariant under each element of $O(n)_H$ or of $SO(n)_H$. If $\dim H=n-1$, then $X$ is rotationally symmetric with respect to $H$ if and only if it is $H$-symmetric.

As usual, $\alpha_1,\dots,\alpha_l\in \R$ are called {\em linearly independent over $\mathbb{Q}$} if $\sum_{j=1}^l q_j \alpha_j=0$ with $q_1,\dots,q_l\in \Q$ implies that $q_1=\dots=q_l=0$.

Finally, ${\mathcal K}^n_n$ signifies the class of {\em convex bodies}, i.e., compact convex sets with interior points.  This notation follows that of Schneider's classic text \cite{Sch93}.

\section{Full rotational symmetry via reflections in finitely many subspaces}\label{reflections}

This section focuses on finding finite sets of $i$-dimensional subspaces such that reflections in these subspaces generate full rotational symmetry.

Our first main result, Theorem~\ref{teo:sphericalsym_lines} below, relies essentially on the next lemma, a special case of the symmetry extension lemma, Lemma~\ref{extension_lemma}. It is closely related to \cite[Lemma~4]{MS43}, a result stated using the notion of dimension of a subgroup of a Lie group and proved using group theory.  We provide a simple topological proof.  Note that since $E\subset S^{n-1}$, requiring $E$ to be invariant with respect to every element of $O(n)_H$ is the same as requiring $E$ to be rotationally symmetric with respect to $H$.

\begin{lem}\label{lemma_MS}
Let $H\in {\mathcal{G}}(n,1)$, $n\geq 3$, and let $O(n)_H$ be its pointwise stabilizer in $O(n)$. Let $\phi\in O(n)$ be such that $\phi H\neq H$. If $E\subset S^{n-1}$ is nonempty, closed, and invariant with respect to $\phi$ and to all the elements of $O(n)_H$, then $E=S^{n-1}$.
\end{lem}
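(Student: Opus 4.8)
Let $H = \mathbb{R}v$ for a unit vector $v$, so $O(n)_H$ consists of all orthogonal maps fixing $v$, acting as $O(n-1)$ on $v^\perp$. The key geometric fact is that the $O(n)_H$-orbit of a point $x \in S^{n-1}$ is the sphere $S^{n-1} \cap (v^\perp + (x\cdot v)v)$, i.e. the "parallel of latitude" at height $x\cdot v$; these orbits foliate $S^{n-1}$ and are indexed by the height function $h(x) = x \cdot v \in [-1,1]$. So an $O(n)_H$-invariant closed set $E$ is precisely $h^{-1}(A)$ for some closed set $A \subseteq [-1,1]$, and $E = S^{n-1}$ iff $A = [-1,1]$. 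The plan is to show that $\phi$-invariance of $E$, combined with $\phi H \neq H$, forces $A$ to be all of $[-1,1]$. First I would record that $A$ is nonempty and closed, hence compact, so it attains a maximum $m = \max A$ and a minimum; I aim to show $m = 1$ and (symmetrically, or by applying the argument to $-E$ or using that $\phi$ is invertible) that $\min A = -1$, and then run a connectedness/density argument to fill in everything between.

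**The core argument.** Let $w = \phi^{-1}v$ (or $\phi v$, depending on bookkeeping); since $\phi H \neq H$, the vectors $v$ and $w$ are linearly independent. The set $E$ is invariant under $\phi$, hence the function $x \mapsto (\phi x)\cdot v = x \cdot (\phi^{-1}v) = x\cdot w$ also takes values in $A$ on $E$; equivalently, $E$ is invariant under the "second" height function $h'(x) = x\cdot w$ in the sense that $h'(E) \subseteq$ ... — more precisely, $E$ being invariant under both $O(n)_H$ and $\phi$ means $E$ is invariant under the group they generate, and I want to leverage both the $v$-latitude structure and the $w$-latitude structure. Concretely: pick a point $x_0 \in E$ with $h(x_0) = m$. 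Consider the $v$-latitude $C_m = h^{-1}(m) \subseteq E$, which is a sphere of dimension $n-2 \geq 1$. On $C_m$, the function $x \mapsto x\cdot w$ is a nonconstant affine function restricted to a sphere (nonconstant because $v,w$ are independent and $C_m$ spans an affine subspace of dimension $n-2 \geq 1$ containing directions not orthogonal to $w$ — this uses $n \geq 3$), so it attains a range of values $[a,b]$ with $a < b$. Thus $E$ contains points at every $w$-height in $[a,b]$. Now apply $\phi$: the image $\phi(C_m) \subseteq E$ is a $v$-latitude-invariant... no — rather, the points of $C_m$ at various $w$-heights get moved by $\phi$ to points whose $v$-heights are exactly those $w$-heights, since $(\phi x)\cdot v = x\cdot w$. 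Hence $E$ contains points of $v$-height equal to every value in $[a,b]$, so $[a,b] \subseteq A$. If $b > m$ this contradicts maximality of $m$ — so we must already have had "room", meaning the only escape is $m = 1$. I would make this rigorous by arguing: if $m < 1$, then at the point $x_0$ realizing the max, a neighborhood computation (or the explicit spherical geometry) shows $x\cdot w$ on $C_m$ genuinely exceeds... — actually the cleanest route is: the maximum of $x \cdot w$ over the latitude $C_m$ is strictly larger than the value $x_0 \cdot w$ unless $x_0$ is the north pole of the $w$-foliation restricted to that latitude, and one shows $b \geq$ the maximum of $x\cdot w$ over all of $S^{n-1}$ that is compatible, driving $m$ up to $1$. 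Iterating, or using compactness and the fact that $A$ must be "closed under" this latitude-spreading operation in both the $v$- and $w$-directions, forces $A = [-1,1]$.

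**Cleanest packaging.** Rather than the somewhat delicate extremal argument, I would prefer to phrase it topologically as the excerpt advertises ("a simple topological proof"): Let $G = \overline{\langle O(n)_H, \phi\rangle}$, a closed subgroup of $O(n)$. The set $E$ is $G$-invariant and nonempty; it suffices to show $G$ acts transitively on $S^{n-1}$, for then $E$, being a nonempty closed invariant set, is a full orbit, hence all of $S^{n-1}$. Since $G \supseteq O(n)_H \cong O(n-1)$ and $G$ contains an element $\phi$ with $\phi H \neq H$, $G$ is a closed subgroup of $O(n)$ strictly larger than $O(n)_H$. The closed subgroups of $O(n)$ containing $O(n-1)$ (embedded as the stabilizer of a line) are classically just $O(n-1)$ itself and $O(n)$ (for $n \geq 3$; this is where $n \geq 3$ and the classification of such "big" subgroups enters, and is morally the content of \cite[Lemma 4]{MS43}). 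Hence $G = O(n)$, which acts transitively on $S^{n-1}$, and $E = S^{n-1}$.

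**Main obstacle.** The hard part is justifying the step "a closed subgroup of $O(n)$ strictly containing the stabilizer $O(n)_H$ of a line must be all of $O(n)$" without simply citing Lie-group structure theory — since the paper wants an elementary topological proof. So in practice I would run the explicit latitude-spreading argument of the second paragraph: track the two height functions $h(x) = x\cdot v$ and $h'(x) = x\cdot w$, use that $\phi$ intertwines them ($h \circ \phi = h'$ up to the sign conventions), use $O(n)_H$ to spread $E$ over full $v$-latitudes, use that on a $v$-latitude (a sphere of dimension $\geq 1$, using $n \geq 3$) the function $h'$ is nonconstant with a full interval of values, and push back via $\phi$ to enlarge the set $A \subseteq [-1,1]$ of attained $v$-heights; a compactness/maximality argument then shows $A$ has no proper closed $\phi$-and-$O(n)_H$-stable part, so $A = [-1,1]$ and $E = S^{n-1}$. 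The delicate point to get right is the quantitative claim that the interval of $h'$-values over a latitude at height $m < 1$ always reaches strictly above $m$ (so that maximality is genuinely violated unless $m=1$), which is a short but careful piece of spherical trigonometry relying on $v \neq \pm w$.
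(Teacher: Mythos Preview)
Your proposal contains a genuine gap. The ``delicate point'' you flag at the end --- that on the $v$-latitude $C_m$ at height $m<1$ the function $x\mapsto x\cdot w$ always attains a value strictly above $m$ --- is simply false. Writing $c=v\cdot w$ and $s=\sqrt{1-c^2}$, the range of $x\cdot w$ on $C_m$ is exactly $[mc-\sqrt{1-m^2}\,s,\ mc+\sqrt{1-m^2}\,s]$, and the upper endpoint exceeds $m$ if and only if $2m^2<1+c$. So for example when $v\perp w$ (i.e.\ $c=0$) and $m>1/\sqrt{2}$, one application of $\phi$ to $C_m$ yields only $v$-heights in $[-\sqrt{1-m^2},\sqrt{1-m^2}]$, all strictly below $m$. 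Your one-step maximality contradiction therefore does not go through. It is true that \emph{iterating} the spreading operation (e.g.\ pass through height $0$, then spread to $[-1,1]$) eventually covers everything, and you gesture at this, but you never set it up, and your final paragraph reverts to asserting the false one-step claim. The Lie-theoretic ``cleanest packaging'' is correct but is precisely what the paper is avoiding.

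The paper's argument is organized quite differently and sidesteps the iteration entirely. Instead of tracking extremal heights in $A=h(E)$, it works with the relative boundary $\relbd E$ in $S^{n-1}$: assuming $E\neq S^{n-1}$, one finds a point $p\in(\relbd E)\setminus H$; its $O(n)_H$-orbit is a full latitude circle $C\subset\relbd E$; applying $\phi$ gives a \emph{tilted} circle $\phi C\subset\relbd E$ whose projection onto $H$ is a nondegenerate interval $I$ (this is exactly where $\phi H\neq H$ enters); sweeping $\phi C$ by $O(n)_H$ then produces the band $\{u\in S^{n-1}:u|H\in I\}\subset\relbd E$, which has nonempty relative interior --- impossible for the boundary of a closed set. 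The advantage of this boundary approach is that it is a single, non-quantitative step: you never need to compare the new heights to $m$, only to know that the tilted circle is not contained in a single latitude.
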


\begin{proof}
Suppose that $E\neq S^{n-1}$.  The set $S^{n-1}\setminus H$ contains at least one point in $E$, for otherwise $E\subset H$ and $E$ could not be invariant with respect to $\phi$.  It also contains at least one point not in $E$, or else $E$, being closed, would coincide with $S^{n-1}$.  Since any continuous curve joining a point of $E$ to a point not in $E$ contains a point of $\relbd E$, there exists a point $v\in S^{n-1}\setminus H$ with $v\in \relbd E$, where here and for the remainder of the proof, $\relbd$ denotes the boundary relative to $S^{n-1}$.

Note that if $\psi \in O(n)$ and $\psi E=E$, then $\psi(\relbd E)=\relbd E$.  It follows that the orbit $C=O(n)_H(v)$ of $v$ under $O(n)_H$ is a hyperplane section of $S^{n-1}$ contained in $\relbd E$. It is connected because $n\geq 3$. The image $\phi C$ of $C$ under $\phi$ is also a hyperplane section of $S^{n-1}$ contained in $\relbd E$.  The projection $(\phi C)|H$ of $\phi C$ onto $H$ is an interval $I$, since $\phi C$ is connected.  See Figure~1.  The set $I$ is not a single point, because the hyperplane containing $\phi C$ is not orthogonal to $H$, due to $\phi H\neq H$.
Let
\begin{equation}\label{lemma_MS_1}
D=\{O(n)_H(x) : x\in\phi C\}=\{u\in\Sn : u|H\in I\}.
\end{equation}

Let $D'$ be the interior of $D$ relative to $S^{n-1}$ and let $I'$ be the interior of $I$ relative to $H$. Then \eqref{lemma_MS_1} yields
$D'=\{u\in\Sn : u|H\in I'\}\neq\emptyset$. However, $D\subset \relbd E$, and since $E$ is closed, the interior of $\relbd E$ relative to $S^{n-1}$ is empty, so $D'=\emptyset$, a contradiction.
\end{proof}

\begin{figure}[htb]\label{fig1}
\begin{center}
\epsfig{file=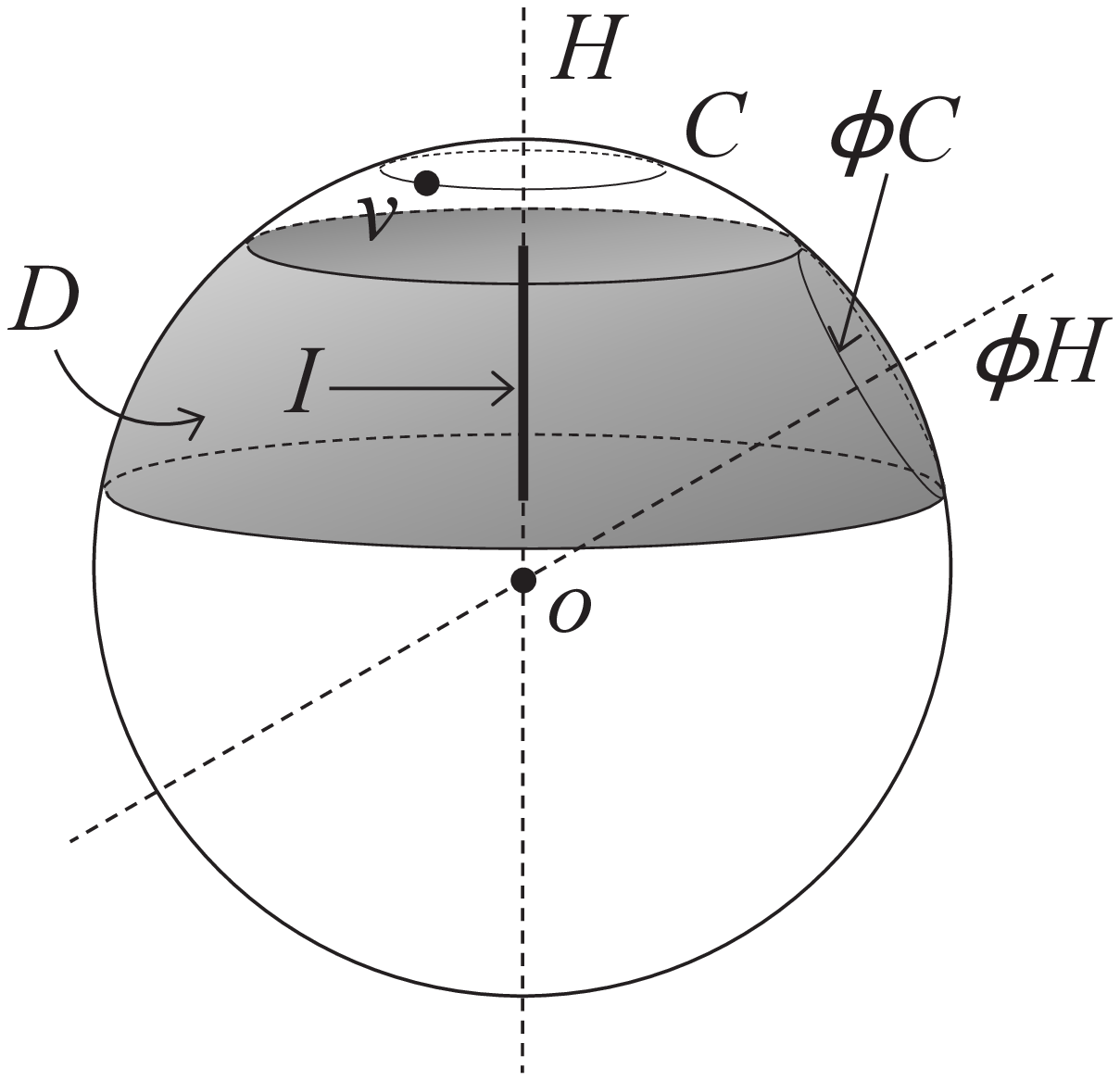, width=13.5cm}
\end{center}
\vspace{-0.5in}
\caption{}
\end{figure}

\begin{thm}\label{teo:sphericalsym_lines}
Let $H_j\in {\mathcal{G}}(n,1)$, $j=1,\dots,n$, be such that

\noindent{\rm{(i)}} at least two of them form an angle that is an irrational multiple of $\pi$,

\noindent{\rm{(ii)}} $H_1+\cdots+H_n=\R^n$, and

\noindent{\rm{(iii)}} $\{ H_1,\dots,H_n\}$ cannot be partitioned into two mutually orthogonal nonempty subsets.

If $E\subset\Sn$ is nonempty, closed, and such that $R_{H_j}E=E$, $j=1,\dots,n$, then $E=\Sn$.
\end{thm}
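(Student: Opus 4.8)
The plan is to prove, by induction on $m$, that $E$ must be invariant under the full rotation group of an $m$-dimensional subspace spanned by $m$ of the given lines, ending at $m=n$ with invariance under all of $SO(n)$.

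First, the setup. Since $E$ is closed and $R_{H_j}E=E$, $E$ is invariant under the closure (in $O(n)$) of any subgroup generated by the $R_{H_j}$. Condition (iii) says exactly that the graph $\mathcal G$ on $\{1,\dots,n\}$, with an edge joining $i$ and $j$ whenever $H_i\not\perp H_j$, is connected; by (i) some edge, which after relabelling I take to be $\{1,2\}$, corresponds to an angle that is an irrational multiple of $\pi$; and by (ii) the lines $H_1,\dots,H_n$ are linearly independent, so every $V_m:=H_1+\dots+H_m$ has dimension $m$. Picking a spanning tree of $\mathcal G$ that contains the edge $\{1,2\}$ and listing its vertices starting from that edge, I relabel the $H_j$ so that $H_1\not\perp H_2$ at an irrational angle and, for each $m\in\{2,\dots,n-1\}$, $H_{m+1}$ is not orthogonal to $H_j$ for some $j\le m$; equivalently $H_{m+1}\not\perp V_m$.

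Now the induction: I claim $E$ is invariant under $SO(V_m)$ for $m=2,\dots,n$, where $SO(V_m)\cong SO(m)$ is the group of rotations of $V_m$ extended by the identity on $V_m^{\perp}$. For $m=2$, $R_{H_1}R_{H_2}$ acts on the plane $V_2$ as rotation through twice the (irrational multiple of $\pi$) angle between $H_1$ and $H_2$, and as the identity on $V_2^{\perp}$; hence its powers are dense in $SO(V_2)$, and $E$ — closed and invariant under $R_{H_1}$ and $R_{H_2}$ — is $SO(V_2)$-invariant. For the step $m\to m+1$, set $V_m'=R_{H_{m+1}}V_m$. Since $H_{m+1}\cap V_m=\{o\}$ and $H_{m+1}\not\perp V_m$, the subspace $V_m'$ is an $m$-dimensional subspace of $V_{m+1}=V_m+H_{m+1}$ distinct from $V_m$, so $V_m+V_m'=V_{m+1}$; moreover $SO(V_m)$ and $SO(V_m')=R_{H_{m+1}}SO(V_m)R_{H_{m+1}}^{-1}$ both fix $V_{m+1}^{\perp}$ pointwise. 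As $E$ is invariant under $SO(V_m)$ and under $R_{H_{m+1}}$, it is invariant under $\Gamma:=\cl\langle SO(V_m),SO(V_m')\rangle$, a closed connected subgroup of $SO(V_{m+1})$ whose Lie algebra contains $\mathfrak{so}(V_m)+\mathfrak{so}(V_m')$. Since $\mathfrak{so}(V_m)\cap\mathfrak{so}(V_m')=\mathfrak{so}(V_m\cap V_m')$ and $\dim(V_m\cap V_m')=m-1$, this sum has dimension $2\binom{m}{2}-\binom{m-1}{2}=\binom{m+1}{2}-1$, i.e.\ codimension $1$ in $\mathfrak{so}(V_{m+1})$; as $\mathfrak{so}(m+1)$ has no codimension-one subalgebra for $m+1\ge3$, $\Gamma$ has the full dimension of $SO(V_{m+1})$ and hence equals it. So $E$ is $SO(V_{m+1})$-invariant, completing the induction. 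Taking $m=n$ and using (ii) that $V_n=\R^n$, $E$ is invariant under $SO(n)$, which is transitive on $S^{n-1}$; since $E\ne\emptyset$, $E=S^{n-1}$. (Alternatively one can stop at $m=n-1$ and invoke Lemma~\ref{lemma_MS}: after first peeling off a line $H_n$ with $\mathcal G-\{n\}$ still connected and still carrying the irrational edge, one has $SO(n)_H=SO(V_{n-1})$ for the line $H=V_{n-1}^{\perp}$, $E$ is $SO(n)_H$-invariant, and $R_{H_n}H\ne H$ by (ii) and (iii); the proof of Lemma~\ref{lemma_MS} uses $O(n)_H$ only through the orbit $O(n)_H(v)$, which for $n\ge3$ equals $SO(n)_H(v)$, so that lemma applies and yields $E=S^{n-1}$.)

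The step I expect to be the main obstacle is the passage $m\to m+1$: showing that two copies of $SO(m)$ sitting in $SO(m+1)$ as the rotation groups of two distinct hyperplanes generate (the closure of) all of $SO(m+1)$. The dimension count above disposes of this cleanly, but one has to be careful that at each stage the accumulated symmetry group fixes $V_m^{\perp}$ pointwise, so that conjugating by $R_{H_{m+1}}$ genuinely enlarges the subspace on which $E$ is rotationally symmetric rather than merely rotating it around; the linear-independence hypothesis (ii) and the chosen ordering are exactly what make $V_m'$ land inside $V_{m+1}$ and differ from $V_m$. The remaining work — the spanning-tree ordering, and, in the alternative ending, the graph-theoretic peeling of $H_n$ — is routine.
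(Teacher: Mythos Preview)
Your proof is correct and shares the same inductive skeleton as the paper's: reorder the lines so that $H_1,H_2$ carry the irrational angle and each $H_{m+1}$ is non-orthogonal to $V_m=H_1+\cdots+H_m$, establish $SO(V_2)$-invariance from Kronecker, then climb one dimension at a time.  The difference lies entirely in the inductive step.  The paper deduces $SO(V_{m+1})$-invariance from $SO(V_m)$-invariance via Lemma~\ref{lemma_MS}, an elementary topological argument (a nontrivial orbit of the relative boundary of $E$ would sweep out an open set in the sphere).  You instead observe that $R_{H_{m+1}}SO(V_m)R_{H_{m+1}}=SO(V_m')$ with $V_m'=R_{H_{m+1}}V_m$ a second hyperplane of $V_{m+1}$, compute that $\mathfrak{so}(V_m)+\mathfrak{so}(V_m')$ has codimension one in $\mathfrak{so}(V_{m+1})$, and invoke the absence of codimension-one subalgebras in $\mathfrak{so}(k)$ for $k\ge 3$.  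This Lie-theoretic route is efficient and avoids any pointwise analysis of $E$, at the cost of importing a structural fact about $\mathfrak{so}(k)$ that, while standard, is not proved in the paper (for $k=3$ it is a short computation, for $k=4$ a Goursat argument, and for $k\ge 5$ it follows from simplicity).  The paper's approach, by contrast, is self-contained and its key ingredient generalizes to the symmetry extension lemma (Lemma~\ref{extension_lemma}) used for higher-dimensional $H_j$; your argument does not obviously extend to that setting, since the two conjugate copies of $SO(L)$ one obtains from $R_H$ need not have Lie algebras summing to large codimension in $\mathfrak{so}(H+L)$.  Your alternative ending via Lemma~\ref{lemma_MS} is also fine, with the minor caveat that you are appealing to the \emph{proof} of that lemma (orbits of $SO(n)_H$ coincide with those of $O(n)_H$ when $n\ge 3$) rather than its statement.
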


\begin{proof}
We claim that there is an ordering of $H_1,\dots, H_n$ so that for $m=2,\dots,n$ and all $v\in E$, we have
\begin{equation}\label{th_sphericalsym_lines_inductivehp}
\Sn\cap (H_1+\cdots+H_m+v)\subset E.
\end{equation}
When $m=n$, this yields the theorem, by hypothesis (ii).

We argue by induction on $m$.  By reordering, if necessary, we may, by hypothesis (i), assume that $H_1$ and $H_2$ form an angle that is an irrational multiple of $\pi$. The composition $R_{H_2}R_{H_1}$ maps the $2$-dimensional plane $H_1+H_2+v$ to itself. Moreover, $R_{H_2}R_{H_1}$ restricted to $H_1+H_2+v$ is a rotation by an irrational multiple of $\pi$.  By Kronecker's approximation theorem \cite[Theorem~7.7]{Apo}, the set $\{(R_{H_2}R_{H_1})^mv: m\in \N\}$ is dense in $\Sn\cap (H_1+H_2+v)$, so the invariance of $E$ with respect to $R_{H_2}R_{H_1}$ and its being closed imply that $\Sn\cap (H_1+H_2+v)\subset E$. This proves \eqref{th_sphericalsym_lines_inductivehp} when $m=2$.

Let $m\in\{2,\dots,n-1\}$ and assume that \eqref{th_sphericalsym_lines_inductivehp} holds for all $v\in E$. Define  $P_k=H_1+\cdots+H_k$ for $k=2,\dots,n$.  By hypothesis (iii), among the lines $H_{m+1},\dots,H_n$ there is at least one that is neither orthogonal to nor contained in $P_m$. By reordering, if necessary, we may assume that this line is $H_{m+1}$.

Let $v\in E$.  We want to prove that
\begin{equation}\label{theo_sph_sym_lines_eq1}
\Sn\cap(P_{m+1}+v)=\Sn\cap(P_m+H_{m+1}+v)\subset E.
\end{equation}
We apply Lemma~\ref{lemma_MS}, where $P_{m+1}+v$ plays the role of the ambient space $\R^n$ (so that $\Sn\cap(P_{m+1}+v)$ acts as the unit sphere), $E$ is replaced by $E\cap(P_{m+1}+v)$, $H$ is the line $P_m^\perp\cap(P_{m+1}+v)$, $O(n)_H$ is replaced by the isometries of $P_{m+1}+v$ to itself that fix $H$, and $\phi=R_{H_1}R_{H_{m+1}}$.  Note that $E\cap (P_{m+1}+v)$ is invariant with respect to $\phi$ since
$$R_{H_{m+1}}(E\cap (P_{m+1}+v))=E\cap (P_{m+1}-v)$$
and
$$R_{H_{1}}(E\cap (P_{m+1}-v))=E\cap (P_{m+1}+v).$$
The other hypotheses of Lemma~\ref{lemma_MS} are also satisfied. Indeed, $\dim(P_{m+1}+v)\ge 3$, and $\phi H\neq H$, because $H_{m+1}$ is neither orthogonal to nor contained in $P_m$; and the assumption that $E\cap (P_{m+1}+v)$ is invariant under each isometry of $P_{m+1}+v$ to itself that fixes $H$ comes from the inductive hypothesis \eqref{th_sphericalsym_lines_inductivehp}, which says that $S^{n-1}\cap (P_m+w)\subset E$ for each $w\in E$.  With all this in hand, Lemma~\ref{lemma_MS} gives \eqref{theo_sph_sym_lines_eq1}.
\end{proof}

Burchard, Chambers, and Dranovski \cite[Proposition~4.2]{BCD} prove a result similar to the previous theorem for reflections in hyperplanes through the origin. More precisely, they prove that if $H_j\in {\mathcal{G}}(n,n-1)$, $j=1,\dots,n$, then under the same assumptions on
$H_j^{\perp}$, $j=1,\dots,n$, as in Theorem~\ref{teo:sphericalsym_lines}, a nonempty closed subset of $\Sn$ that is symmetric with respect to $H_j$, $j=1,\dots,n$, coincides with $\Sn$.  In fact, in \cite[p.~1189]{BCD} they state without proof a stronger result, which is also a consequence of the main theorem of Eaton and Perlman \cite{EP}.  The latter states that if $G$ is a closed subgroup of $O(n)$ generated by reflections in $(n-1)$-dimensional subspaces that is infinite and irreducible (i.e., there is no nontrivial proper subspace $S$ of $\R^{n}$ such that $gS\subset S$ for $g\in G$), then $G=O(n)$.  Consequently, instead of condition (i) of Theorem~\ref{teo:sphericalsym_lines}, one need only assume that the subgroup generated by the reflections in $(n-1)$-dimensional subspaces is not a finite Coxeter subgroup of $O(n)$.  It appears to be unknown whether the result in \cite{EP} extends to reflections in lower-dimensional subspaces; in this connection, see Problem~\ref{probaug1}.

\begin{rem}\label{remoct16}
{\em The similarity between Theorem~\ref{teo:sphericalsym_lines} and \cite[Proposition~4.2]{BCD} suggests that there may be some duality at play.  To investigate this possibility, observe that if $H$ is a subspace of $\R^n$, then clearly
\begin{equation}\label{reflection_wrt_perp}
 R_{H^\perp}=-Id\circ R_{H}=R_H\circ-Id,
\end{equation}
where $Id$ is the identity map.  Now let $H_j\in {\mathcal{G}}(n,1)$, $j=1,\dots,n$, satisfy the hypotheses of Theorem~\ref{teo:sphericalsym_lines}.  Then $H_j^{\perp}\in {\mathcal{G}}(n,n-1)$, $j=1,\dots,n$, satisfy the hypotheses of \cite[Proposition~4.2]{BCD}.  Suppose that $E\subset S^{n-1}$ is nonempty, closed, and such that $R_{H_j}E=E$, $j=1,\dots,n$.  If $E$ is $o$-symmetric,  \eqref{reflection_wrt_perp} implies that $R_{H_j^{\perp}}E=E$, $j=1,\dots,n$, and then \cite[Proposition~4.2]{BCD} gives $E=S^{n-1}$.  Thus \cite[Proposition~4.2]{BCD} implies Theorem~\ref{teo:sphericalsym_lines} when $E$ is $o$-symmetric, and a similar argument shows that Theorem~\ref{teo:sphericalsym_lines} implies \cite[Proposition~4.2]{BCD} when $E$ is $o$-symmetric. For general sets $E\subset S^{n-1}$, however, the relationship between these results is not obvious; see Problem~\ref{proboct16}.}
\end{rem}

To obtain results similar to Theorem~\ref{teo:sphericalsym_lines} for the case when $i\in \{2,\dots,n-2\}$, some preliminary observations will be useful.  Let $H_1$ and $H_2$ be subspaces of $\R^n$ of dimension  $k$ and $i$, respectively, with $k\geq i$. There exists a unique increasing sequence $\alpha_1,\dots,\alpha_i$ in $[0,\pi/2]$ and an orthonormal basis $e_1,\dots,e_n$ for $\R^n$ such that
\begin{eqnarray}\label{angles}
H_1&=&\lin\{ e_1,\dots,e_k\}\quad\text{and }\nonumber\\
H_2&=&\lin\{\cos \alpha_j\, e_j+\sin \alpha_j\, e_{i+k+1-j} : j=1,\dots,i\}\nonumber\\
&=&\lin\{\cos \alpha_1\, e_1+\sin \alpha_1\, e_{k+i},\dots,\cos \alpha_i\, e_i+\sin \alpha_i\, e_{k+1}\},
\end{eqnarray}
where the first $i+k-n$ angles are zero if $i+k>n$.  Then it is easy to see that any $x\in\R^n$ can be written as
\begin{equation}\label{representation_x}
x=\sum_{j=1}^i\rho_j(\cos\theta_j\, e_j+\sin \theta_j\, e_{i+k+1-j}) +y_1+y_2
\end{equation}
for suitable $\rho_1,\dots,\rho_i\in\R$, $\theta_1,\dots,\theta_i\in[0,2\pi)$, $y_1\in\lin\{e_{i+1},\dots,e_k\}$ (where $y_1=o$ when $i=k$) and $y_2\in(H_1+H_2)^\perp$.

\begin{lem}\label{pr_two_i_reflections}
Let the subspaces $H_1$ and $H_2$, of dimension $k$ and $i\le k$, respectively, be as in \eqref{angles} and let the point $x\in\R^n$ be specified by \eqref{representation_x}. Then
\begin{equation}\label{representation_sh1}
R_{H_1} x=\sum_{j=1}^i\rho_j(\cos(-\theta_j)\, e_j+\sin (-\theta_j)\, e_{i+k+1-j})+y_1-y_2,
\end{equation}
\begin{equation}\label{representation_sh2}
R_{H_2} x=\sum_{j=1}^i\rho_j(\cos(2\alpha_j-\theta_j)\, e_j+\sin (2\alpha_j-\theta_j)\, e_{i+k+1-j}) -y_1-y_2,
\end{equation}
and for $m\in\N$,
\begin{equation}\label{representation_sh2sh1}
(R_{H_2}R_{H_1})^m x=\sum_{j=1}^i\rho_j(\cos(2m\alpha_j+\theta_j)\, e_j+\sin (2m\alpha_j+\theta_j)\, e_{i+k+1-j}) +(-1)^my_1+y_2.
\end{equation}
\end{lem}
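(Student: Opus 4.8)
The plan is to verify the three formulas \eqref{representation_sh1}, \eqref{representation_sh2}, and \eqref{representation_sh2sh1} by direct computation, exploiting the block structure of the decomposition in \eqref{representation_x}. The key observation is that $\R^n$ splits orthogonally as
\[
\R^n=\bigoplus_{j=1}^i \lin\{e_j,e_{i+k+1-j}\}\ \oplus\ \lin\{e_{i+1},\dots,e_k\}\ \oplus\ (H_1+H_2)^\perp,
\]
and that each of $R_{H_1}$ and $R_{H_2}$ respects this splitting. On the $j$-th coordinate plane $\Pi_j=\lin\{e_j,e_{i+k+1-j}\}$, the subspace $H_1$ meets $\Pi_j$ in the line $\R e_j$ and $H_2$ meets $\Pi_j$ in the line spanned by $\cos\alpha_j\,e_j+\sin\alpha_j\,e_{i+k+1-j}$, i.e.\ the line at angle $\alpha_j$ to $\R e_j$. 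On the middle block $\lin\{e_{i+1},\dots,e_k\}$, which lies in $H_1$ and is orthogonal to $H_2$, the reflection $R_{H_1}$ acts as the identity and $R_{H_2}$ acts as $-Id$. On $(H_1+H_2)^\perp$, both reflections act as $-Id$. So the whole problem reduces to the two-dimensional fact that reflection in a line through the origin of $\R^2$ making angle $\beta$ with the positive $x$-axis sends the point with polar angle $\theta$ (and radius $\rho$) to the point with polar angle $2\beta-\theta$ (same radius).

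First I would record that elementary planar fact: writing a point of $\Pi_j$ in the "rotated polar" form $\rho_j(\cos\theta_j\,e_j+\sin\theta_j\,e_{i+k+1-j})$, reflection in $\R e_j$ (angle $\beta=0$) gives polar angle $-\theta_j$, which yields \eqref{representation_sh1} on the $\Pi_j$ blocks; combining with $R_{H_1}=Id$ on the middle block and $R_{H_1}=-Id$ on $(H_1+H_2)^\perp$ gives \eqref{representation_sh1} in full. Reflection in the line at angle $\alpha_j$ gives polar angle $2\alpha_j-\theta_j$, which with $R_{H_2}=-Id$ on both the middle block and $(H_1+H_2)^\perp$ gives \eqref{representation_sh2}. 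For \eqref{representation_sh2sh1} I would first compute $R_{H_2}R_{H_1}$ on a single $\Pi_j$: the composition of two reflections in lines at angles $0$ and $\alpha_j$ is the rotation by $2\alpha_j$, so it sends polar angle $\theta_j$ to $\theta_j+2\alpha_j$; iterating $m$ times gives $\theta_j+2m\alpha_j$. On the middle block $R_{H_2}R_{H_1}$ acts as $(-Id)\circ Id=-Id$, so its $m$-th power is $(-1)^m Id$; on $(H_1+H_2)^\perp$ it acts as $(-Id)\circ(-Id)=Id$. Assembling the three pieces gives \eqref{representation_sh2sh1}, and one can either prove the iteration by induction on $m$ or simply note that $R_{H_2}R_{H_1}$ is linear and block-diagonal so its $m$-th power is computed block by block.

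The only genuine point requiring care—rather than a true obstacle—is checking that $R_{H_1}$ and $R_{H_2}$ really do leave each $\Pi_j$ invariant and act on it as claimed, i.e.\ that $H_1\cap\Pi_j=\R e_j$ and $H_2\cap\Pi_j$ is the stated line, and that the orthogonal complements within $\Pi_j$ behave correctly. This follows immediately from the explicit bases in \eqref{angles}: $e_j\in H_1$ for $j\le k$, the vector $v_j:=\cos\alpha_j e_j+\sin\alpha_j e_{i+k+1-j}$ is a basis vector of $H_2$, and $v_j\in\Pi_j$, while the $e_{i+k+1-j}$ directions ($j=1,\dots,i$) are orthogonal to $H_1$. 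Thus the formula $R_H x=2(x|H)-x$ applied blockwise reduces to computing the projection of $x|\Pi_j$ onto the one-dimensional subspace $H\cap\Pi_j$, which is the standard planar reflection. Once this block decomposition is in place, everything else is the routine trigonometry of reflecting and rotating in $\R^2$, together with an easy induction (or a direct linear-algebra argument) for the $m$-th power in \eqref{representation_sh2sh1}.
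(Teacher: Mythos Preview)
Your proof is correct and follows essentially the same route as the paper: both arguments compute $R_{H_1}$ and $R_{H_2}$ directly from the explicit bases in \eqref{angles} and the formula $R_H x=2(x|H)-x$, reduce to standard trigonometric identities, and then obtain \eqref{representation_sh2sh1} from the first two formulas. The only difference is organizational: you first isolate the invariant two-dimensional blocks $\Pi_j=\lin\{e_j,e_{i+k+1-j}\}$ and invoke the planar fact that reflection in the line at angle $\beta$ sends polar angle $\theta$ to $2\beta-\theta$, whereas the paper carries out the projection onto $H_2$ coordinatewise and then applies the product-to-sum identities $2\cos(\alpha_j-\theta_j)\cos\alpha_j-\cos\theta_j=\cos(2\alpha_j-\theta_j)$ and its sine analogue---the same computation in slightly different dress.
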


\begin{proof}
To obtain \eqref{representation_sh1}, note that $R_{H_1}$ keeps the component of $x$ in $\lin\{e_1,\dots,e_k\}$ unchanged and changes the sign of the component in $\lin\{e_{k+1},\dots,e_n\}$.

To prove~\eqref{representation_sh2}, we first compute $x|H_2$. Since the vectors in the representation of $H_2$ are orthonormal, we have
$$
x|H_2=\sum_{j=1}^i\big(x\cdot(\cos \alpha_j\, e_j+\sin \alpha_j\, e_{i+k+1-j})\big)\big(\cos \alpha_j\, e_j+\sin \alpha_j\, e_{i+k+1-j}\big).
$$
If  $x$ is as in \eqref{representation_x}, then
$$
x\cdot(\cos \alpha_j\, e_j+\sin \alpha_j\, e_{i+k+1-j})=\rho_j\cos(\alpha_j-\theta_j)
$$
and therefore
$$
x|H_2=\sum_{j=1}^i\rho_j\cos(\alpha_j-\theta_j)(\cos\alpha_j\, e_j+\sin\alpha_j\, e_{i+k+1-j}).
$$
Since $R_{H_2} x=2(x|H_2)-x$, we obtain
\begin{multline*}
R_{H_2} x=\sum_{j=1}^i\rho_j(2\cos(\alpha_j-\theta_j)\cos\alpha_j - \cos\theta_j) e_j+\\
+\sum_{j=1}^i\rho_j(2\cos(\alpha_j-\theta_j)\sin\alpha_j - \sin\theta_j) e_{i+k+1-j}-y_1-y_2.
\end{multline*}
Then the relations
$$
2\cos(\alpha_j-\theta_j)\cos\alpha_j - \cos\theta_j=\cos(2\alpha_j-\theta_j)$$
and
$$2\cos(\alpha_j-\theta_j)\sin\alpha_j - \sin\theta_j=\sin(2\alpha_j-\theta_j)$$
yield \eqref{representation_sh2}.  Finally,  \eqref{representation_sh2sh1} is an immediate consequence of \eqref{representation_sh1} and \eqref{representation_sh2}.
\end{proof}

It follows from \eqref{reflection_wrt_perp} that if $H_1$ and $H_2$ are subspaces of $\R^n$, then
\begin{equation}\label{oct15eq}
 R_{H_2^\perp}R_{H_1^\perp}=R_{H_2}R_{H_1}.
\end{equation}

Recall that if $1\le p\neq q\le n$, then $SO(n)_{{\lin\{e_p,e_{q}\}}^{\perp}}$ denotes the isometries in $SO(n)$ that act as the identity on $\lin\{e_p,e_{q}\}^{\perp}$, the $(n-2)$-dimensional subspace orthogonal to $e_p$ and $e_{q}$.

\begin{lem}\label{pr_kronecker}
Let $i\leq n/2$ and let $\alpha_1,\dots,\alpha_i$ be an increasing sequence in $(0,\pi/2)$ such that $\pi, \alpha_1,\dots, \alpha_i$ are linearly independent over $\Q$. Let $H_1, H_2\in {\mathcal{G}}(n,i)$ have representations as in \eqref{angles}.
Then both
\begin{equation}\label{kronecker}
\{(R_{H_2}R_{H_1})^m:\ m\in\N\}~~\quad{\text{and}}~~\quad \{(R_{H_2^\perp}R_{H_1^\perp})^m:\ m\in\N\}
\end{equation}
are dense in
$SO(n)_{{\lin\{e_j,\,e_{2i-j+1}\}}^{\perp}}$, $j=1,\dots,i$.
\end{lem}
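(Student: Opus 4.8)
The plan is to reduce the whole statement to a single application of Lemma~\ref{pr_two_i_reflections} together with the multidimensional form of Kronecker's approximation theorem. First I would dispose of the second set in \eqref{kronecker}: by \eqref{oct15eq} we have $R_{H_2^\perp}R_{H_1^\perp}=R_{H_2}R_{H_1}$, so the two cyclic groups coincide and it suffices to analyze $\{(R_{H_2}R_{H_1})^m:m\in\N\}$. Next I would apply Lemma~\ref{pr_two_i_reflections} with $k=i$; then $H_1=\lin\{e_1,\dots,e_i\}$, the summand $y_1$ in \eqref{representation_x} vanishes, and $(H_1+H_2)^\perp=\lin\{e_{2i+1},\dots,e_n\}$. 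Formula \eqref{representation_sh2sh1} then says that for every $x$, the point $(R_{H_2}R_{H_1})^mx$ is obtained from $x$ by rotating, for each $j=1,\dots,i$, the component of $x$ in the coordinate $2$-plane $P_j:=\lin\{e_j,e_{2i-j+1}\}$ by the angle $2m\alpha_j$, while leaving the component in $\lin\{e_{2i+1},\dots,e_n\}$ fixed. In other words $(R_{H_2}R_{H_1})^m$ is exactly the element of $SO(n)$ that rotates $P_j$ by $2m\alpha_j$ for each $j$ and is the identity on $\lin\{e_{2i+1},\dots,e_n\}$.

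Since $i\le n/2$, the indices $1,\dots,2i$ are distinct, so the planes $P_1,\dots,P_i$ are mutually orthogonal with $(P_1+\cdots+P_i)^\perp=\lin\{e_{2i+1},\dots,e_n\}$. Hence each $SO(n)_{P_j^\perp}$ is a circle group acting only in $P_j$, these circle groups commute, and the group they generate is their internal direct product $\prod_{j=1}^i SO(n)_{P_j^\perp}$, which I identify with the $i$-torus $(\R/2\pi\Z)^i$. Under this identification the previous paragraph says precisely that $(R_{H_2}R_{H_1})^m$ corresponds to the point $(2m\alpha_1,\dots,2m\alpha_i)$ of $(\R/2\pi\Z)^i$. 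So the assertion to be proved is that the sequence $m\mapsto(2m\alpha_1,\dots,2m\alpha_i)$ is dense in $(\R/2\pi\Z)^i$.

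Dividing by $2\pi$, this is equivalent to the density of $\{(m\alpha_1/\pi,\dots,m\alpha_i/\pi)\bmod 1:m\in\N\}$ in $[0,1)^i$, and Kronecker's theorem \cite[Theorem~7.7]{Apo} gives this if and only if $1,\alpha_1/\pi,\dots,\alpha_i/\pi$ are linearly independent over $\Q$, i.e., if and only if $\pi,\alpha_1,\dots,\alpha_i$ are linearly independent over $\Q$ — which is exactly the hypothesis. (Passing from $m\in\Z$ to $m\in\N$ is harmless: the closure of $\{m\beta:m\in\N\}$ in a compact abelian group is a closed subsemigroup, hence a subgroup.) Both assertions of the lemma follow.

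As for difficulty: there is essentially no deep obstacle here — the proof is bookkeeping plus Kronecker. The two points needing genuine care are, first, the index arithmetic, namely that taking $k=i$ turns the index $i+k+1-j$ appearing in \eqref{angles}--\eqref{representation_sh2sh1} into $2i-j+1$, so that the $2$-planes that show up are exactly the $P_j=\lin\{e_j,e_{2i-j+1}\}$ named in the statement; and second, that one must invoke the genuinely $i$-dimensional Kronecker theorem (joint density in the torus, not merely coordinatewise density), for which the joint $\Q$-linear independence of $\pi,\alpha_1,\dots,\alpha_i$ is exactly the right and necessary hypothesis. The condition $i\le n/2$ enters only to make the planes $P_j$ well defined and pairwise orthogonal, and the assumption $\alpha_j\in(0,\pi/2)$ rules out any degeneracy of the representation \eqref{angles}.
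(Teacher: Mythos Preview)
Your proof is correct and follows essentially the same route as the paper: apply \eqref{representation_sh2sh1} with $k=i$ to see that $(R_{H_2}R_{H_1})^m$ rotates each plane $\lin\{e_j,e_{2i-j+1}\}$ by $2m\alpha_j$, then invoke Kronecker's approximation theorem together with \eqref{oct15eq} for the second set in \eqref{kronecker}. Your explicit identification of the commuting circle groups with the $i$-torus and appeal to the multidimensional Kronecker theorem is, if anything, slightly more careful than the paper, which argues one coordinate $j$ at a time.
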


\begin{proof}
Let $j\in \{1,\dots,i\}$. In (\ref{representation_x}), let $x=e_j$, so that $y_1=y_2=o$, $\rho_j=1$, $\rho_l=0$ for $l\neq j$, and $\theta_j=0$.  With $k=i$, (\ref{representation_sh2sh1}) yields
\begin{equation}\label{sep20eq}
(R_{H_2}R_{H_1})^m e_j=\cos 2m\alpha_j\, e_j+\sin 2m\alpha_j\, e_{2i-j+1}.
\end{equation}
The claim regarding the set on the left in \eqref{kronecker} is then a direct consequence of the linear independence of $\pi, \alpha_1,\dots, \alpha_i$ over $\Q$ and Kronecker's approximation theorem \cite[Theorem~7.7]{Apo}.  Indeed, the latter implies that because $\alpha_j$ is not a rational multiple of $\pi$, the set $\{(\cos 2m\alpha_j, \sin 2m\alpha_j): m\in \N\}$ is dense in $S^1$.  By \eqref{sep20eq}, it follows that $\{(R_{H_2}R_{H_1})^m e_j:m\in\N\}$ is dense in $S^{n-1}\cap \lin\{e_j,e_{2i-j+1}\}$ and hence, since $\dim(\lin\{e_j,e_{2i-j+1}\})=2$, that $\{(R_{H_2}R_{H_1})^m: m\in\N\}$ is
dense in $SO(n)_{{\lin\{e_j,\,e_{2i-j+1}\}}^{\perp}}$. This also proves the claim regarding the set on the right in \eqref{kronecker}, since by \eqref{oct15eq}, the two sets in \eqref{kronecker} are equal.
\end{proof}

\begin{lem}\label{pr_sh1sh2sh3}
Let $2\leq i\leq n/2$ and let $\alpha_1,\dots,\alpha_i$ be an increasing sequence in $(0,\pi/2)$ such that $\pi, \alpha_1,\dots, \alpha_i$ are linearly independent over $\Q$. Let $H_1, H_2, H_3\in{\mathcal{G}}(n,i)$ be defined by
\begin{equation}\label{choice_of_the_planes}
\begin{aligned}
H_1=&\lin\{ e_1,e_3,\dots,e_{2i-1}\},\\
H_2=&\lin\{\cos \alpha_j\, e_{2j-1}+\sin \alpha_j\, e_{2j} : j=1,\dots,i\},\quad{\text{and}}\\
H_3=&\lin\{\{\cos \alpha_1\, e_{1}+\sin \alpha_1\, e_{2i}\}\cup\{\cos \alpha_j\, e_{2j-1}+\sin \alpha_j\, e_{2j-2}\} : j=2,\dots,i\}.
\end{aligned}
\end{equation}
Let $E\subset S^{n-1}$ be nonempty, closed, and such that either $R_{H_j}E=E$, $j=1,2,3$, or $R_{H_j^\perp}E=E$, $j=1,2,3$. Then if $x\in E$, we have $\Sn\cap\left(\lin\{e_1,\dots,e_{2i}\}+x\right)\subset E$.
\end{lem}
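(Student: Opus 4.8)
The plan is to exhibit a closed subgroup $G_0$ of $SO(n)$ with two properties: (a) $E$ is invariant under $G_0$, and (b) $G_0$ contains $SO(n)_{V^\perp}$, the group of rotations fixing $V^\perp$, where $V:=\lin\{e_1,\dots,e_{2i}\}$. Granting (a) and (b), the lemma follows at once: for $x\in E$, write $x=u+w$ with $u=x|V$ and $w=x|V^\perp$; then the orbit $SO(n)_{V^\perp}(x)=\{u'+w:u'\in V,\ \|u'\|=\|u\|\}$ is contained in $E$, and since $\|u\|^2+\|w\|^2=1$ this orbit coincides with $\Sn\cap(V+w)=\Sn\cap(V+x)$.

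Take $G_0$ to be the closure in $SO(n)$ of the group generated by $(R_{H_2}R_{H_1})^m$ and $(R_{H_3}R_{H_1})^m$, $m\in\N$. Property (a) holds in either case of the hypothesis: it is immediate if $R_{H_j}E=E$ for $j=1,2,3$, and if instead $R_{H_j^\perp}E=E$ for $j=1,2,3$ it follows from \eqref{oct15eq}, which gives $R_{H_2^\perp}R_{H_1^\perp}=R_{H_2}R_{H_1}$ and $R_{H_3^\perp}R_{H_1^\perp}=R_{H_3}R_{H_1}$. For property (b), note that the pairs $(H_1,H_2)$ and $(H_1,H_3)$ each have principal angles $\alpha_1,\dots,\alpha_i$, so the computation in the proof of Lemma~\ref{pr_kronecker} applies to each after a suitable relabeling of $e_1,\dots,e_{2i}$. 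Using that $\pi,\alpha_1,\dots,\alpha_i$ are linearly independent over $\Q$, this shows that $G_0$ contains $SO(n)_{\lin\{e_{2j-1},e_{2j}\}^\perp}$ for $j=1,\dots,i$ (from $(H_1,H_2)$), together with $SO(n)_{\lin\{e_1,e_{2i}\}^\perp}$ and $SO(n)_{\lin\{e_{2j-2},e_{2j-1}\}^\perp}$ for $j=2,\dots,i$ (from $(H_1,H_3)$). The index pairs arising here are $\{1,2\},\{2,3\},\{3,4\},\dots,\{2i-1,2i\},\{2i,1\}$, and as edges of a graph on the vertex set $\{1,\dots,2i\}$ they form a $2i$-cycle; in particular this graph is connected and spans $\{1,\dots,2i\}$.

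It then remains to invoke the elementary fact that a closed subgroup of $SO(n)$ containing $SO(n)_{\lin\{e_p,e_q\}^\perp}$ for every edge $\{p,q\}$ of a connected graph on $\{1,\dots,m\}$ must contain $SO(n)_{\lin\{e_1,\dots,e_m\}^\perp}$. One way to prove this: if such a subgroup contains $SO(n)_{\lin\{e_p,e_q\}^\perp}$ and $SO(n)_{\lin\{e_q,e_r\}^\perp}$, then conjugating the second group by the quarter-turn lying in the first produces $SO(n)_{\lin\{e_p,e_r\}^\perp}$, so the family of pairs with the containment property is closed under the operation $\{p,q\},\{q,r\}\mapsto\{p,r\}$; connectedness then forces this family to consist of all $2$-element subsets of $\{1,\dots,m\}$, and the groups $SO(n)_{\lin\{e_p,e_q\}^\perp}$ with $1\le p<q\le m$ generate $SO(n)_{\lin\{e_1,\dots,e_m\}^\perp}$. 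Applied with $m=2i$, this gives (b). The one genuinely delicate point is the bookkeeping in the second paragraph---reading off from \eqref{choice_of_the_planes} exactly which coordinate $2$-planes acquire a full circle of rotations under each pair, and verifying that the resulting planes are indexed by a connected graph; the Kronecker density input and the concluding group-generation step are routine given the earlier lemmas.
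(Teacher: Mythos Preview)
Your proof is correct and follows the same architecture as the paper's: apply Lemma~\ref{pr_kronecker} (with the obvious relabeling for the pair $(H_1,H_3)$) to obtain the circle groups $SO(n)_{\lin\{e_p,e_q\}^\perp}$ indexed by edges of a connected graph on $\{1,\dots,2i\}$, then exploit connectedness. The only difference is in the finishing step. The paper argues transitivity directly, by successively rotating the coordinates of a point in $\Sn\cap(V+y)$ into the canonical form $y+(\sqrt{1-\|y\|^2},0,\dots,0)$; you instead upgrade to the stronger conclusion $G_0\supset SO(n)_{V^\perp}$ via the conjugation trick (and the paper works with the path $\{1,2\},\{2,3\},\dots,\{2i-1,2i\}$, explicitly discarding the extra edge $\{1,2i\}$ that you keep to form a cycle). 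Both endgames are short and standard; your formulation is marginally cleaner in that it identifies the full symmetry group rather than only its orbits.
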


\begin{proof}
Assume that $R_{H_j}E=E$ for $j=1,2,3$. Since $E$ is closed, it is invariant with respect to each element of
$\cl\{(R_{H_2}R_{H_1})^m:\ m\in\N\}\cup\cl \{(R_{H_3}R_{H_1})^m:\ m\in\N\}$. The same conclusion holds if we assume instead that $R_{H_j^\perp}E=E$ for $j=1,2,3$, since by \eqref{oct15eq}, $R_{H_2}R_{H_1}=R_{H_2^\perp}R_{H_1^\perp}$ and $R_{H_3}R_{H_1}=R_{H_3^\perp}R_{H_1^\perp}$.  By Lemma~\ref{pr_kronecker}, modified suitably in view of the different expressions for $H_1$ and $H_2$ in (\ref{angles}) with $k=i$ and in (\ref{choice_of_the_planes}), $E$ is invariant with respect to each element of
$$
V_1=\cup\{SO(n)_{\lin\{e_{2j-1},\,e_{2j}\}^{\perp}}: j=1,\dots,i\}.
$$
By using Lemma~\ref{pr_kronecker} similarly but with $H_2$ replaced by $H_3$, we see that $E$ is also invariant with respect to each element of
$$
V_2=\cup\{SO(n)_{\lin\{e_{2j-1},\,e_{2j-2}\}^{\perp}}: j=2,\dots,i\}.
$$
(The factor $SO(n)_{\lin\{e_{1},\,e_{2i}\}^{\perp}}$ could also be included in the latter union, but we do not require this for the remaining argument.) Consequently, $E$ is invariant with respect to each element of the subgroup $G$ of $SO(n)$ generated by $V_1\cup V_2$.

Suppose that $x\in E$ and choose $y\in \lin\{e_1,\dots,e_{2i}\}^\perp$ such that
$$S^{n-1}\cap\left(\lin\{e_1,\dots,e_{2i}\}+y\right)=
S^{n-1}\cap\left(\lin\{e_1,\dots,e_{2i}\}+x\right).$$
It clearly suffices to prove that $G$ acts transitively on $S^{n-1}\cap\left(\lin\{e_1,\dots,e_{2i}\}+y\right)$.  To see this, let $u\in S^{n-1}\cap\left(\lin\{e_1,\dots,e_{2i}\}+y\right)$ and choose
$z=(z_1,\dots,z_{2i})\in\lin\{e_1,\dots,e_{2i}\}$ such that $u=y+z$. Then there is a $\phi_1\in SO(n)_{\lin\{e_{2i-1},\,e_{2i}\}^{\perp}}\subset V_1$ such that
$$\phi_1 u=\phi_1 y+\phi_1 z=y+\left(z_1,z_2,\dots,z_{2i-2},\sqrt{z_{2i-1}^2+z_{2i}^2},0\right)$$
and hence a $\phi_2\in  SO(n)_{\lin\{e_{2i-1},\,e_{2i-2}\}^{\perp}}\subset V_2$ such that
$$\phi_2 \phi_1 u=\phi_2 y+\phi_2 \phi_1 z=y+\left(z_1,z_2,\dots,z_{2i-3},\sqrt{z_{2i-2}^2+z_{2i-1}^2+
z_{2i}^2},0,0\right).$$
Observe that the graph with vertices $\{1,2,\dots,2i\}$ and edges joining ${2j-1}$ to ${2j}$, $j=1,\dots,i$ (corresponding to $V_1$), and joining ${2j-1}$ to ${2j-2}$, $j=2,\dots,i$ (corresponding to $V_2$), is connected, since each vertex is joined to the preceding one and the following one. Using this property, we may choose $\phi_3,\dots,\phi_{2i-1}\in V_1\cup V_2\subset G$, in the same way that we chose $\phi_1$ and $\phi_2$, so that
$$\phi_{2i-1}\cdots \phi_1 u=y+\left(\|z\|,0,\dots,0\right)=y+\left(\sqrt{1-\|y\|^2},0,\dots,0\right).$$
Now if $u'\in S^{n-1}\cap\left(\lin\{e_1,\dots,e_{2i}\}+y\right)$, we can similarly find $\psi_3,\dots,\psi_{2i-1}\in G$ such that
$$\psi_{2i-1}\cdots \psi_1 u'=y+\left(\sqrt{1-\|y\|^2},0,\dots,0\right)=\phi_{2i-1}\cdots \phi_1 u.$$
Thus there is a $g\in G$ with $gu'=u$, so $G$ acts transitively on $S^{n-1}\cap\left(\lin\{e_1,\dots,e_{2i}\}+y\right)$.
\end{proof}

The next lemma says that if $H$ and $L$ are subspaces satisfying the stated hypotheses and a nonempty closed set $E\subset\Sn$ is $H$-symmetric and rotationally symmetric with respect to $L^\perp$, then $E$ is also rotationally symmetric with respect to $(H+L)^\perp$.  A couple of small modifications to the proof show that the lemma also holds when instead of $\dim H < \dim L$ it is assumed that $\dim H=\dim L$ and $H\cap L\neq\{o\}$, but we have no use for this case in the sequel.

\begin{lem}\label{extension_lemma} {\em (Symmetry extension lemma.)}
Let $H$ and $L$ be subspaces of $\R^n$ such that $H \cap L^\perp=\{o\}$ and $\dim H < \dim L$.  If $E\subset \Sn$ is nonempty, closed, and invariant under $R_{H}$ and under any element of $O(n)_{L^\perp}$, then $E$ is invariant under any element of $O(n)_{(H+L)^\perp}$. The same conclusion holds if invariance under $R_{H}$ is replaced by invariance under $R_{H^\perp}$.
\end{lem}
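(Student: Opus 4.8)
The plan is to argue by induction on $\delta:=\dim(H+L)-\dim L$. The single reflection will be used only through the conjugates $R_H\circ g\circ R_H$ with $g\in O(n)_{L^\perp}$; these lie in $O(n)_{(H+L)^\perp}$ and leave $E$ invariant, so the whole argument takes place inside $O(n)_{(H+L)^\perp}$ and never uses $R_H$ in isolation. Since $R_{H^\perp}\circ g\circ R_{H^\perp}=R_H\circ g\circ R_H$ for every such $g$ (because $-\mathrm{Id}$ is central), the same proof yields the variant with $R_{H^\perp}$; only the justification that $E$ is invariant under these conjugates changes. If $H\subseteq L$ then $\delta=0$ and the conclusion is the hypothesis, so assume $\delta\ge 1$. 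Setting $H_1=L$ and $H_2=H$ in \eqref{angles} gives an orthonormal basis $e_1,\dots,e_n$ with $L=\lin\{e_1,\dots,e_\ell\}$ ($\ell=\dim L$), $H=\lin\{\cos\alpha_j\,e_j+\sin\alpha_j\,e_{h+\ell+1-j}:j=1,\dots,h\}$ ($h=\dim H$), and increasing principal angles $0=\alpha_1=\dots=\alpha_s<\alpha_{s+1}\le\dots\le\alpha_h<\pi/2$, all strictly less than $\pi/2$ precisely because $H\cap L^\perp=\{o\}$. Writing $w_j:=e_{h+\ell+1-j}\in L^\perp$ for $j=s+1,\dots,h$, one has $W:=(H+L)\cap L^\perp=\lin\{w_{s+1},\dots,w_h\}$, $H+L=L\oplus W$, $\dim W=h-s=\delta$, and $R_H(L)\subseteq H+L$ (because $2(x|H)-x\in H+L$ for $x\in L$), so $R_H$, and likewise $R_{H^\perp}$, maps $H+L$ onto itself; moreover $(H+L)^\perp\subseteq H^\perp$ is negated by $R_H$ and fixed pointwise by $O(n)_{L^\perp}$, whence $R_H\circ g\circ R_H\in O(n)_{(H+L)^\perp}$.

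For the inductive step, let $\delta\ge1$ and put $L_1=L\oplus\lin\{w_h\}$, $W'=W\ominus\lin\{w_h\}$, so that $H+L=L_1\oplus W'$ and $\dim W'=\delta-1$. We first show that $E$ is invariant under every element of $O(n)_{L_1^\perp}$. Fix $v\in E$ and decompose $v=z+p'+q$ with $z\in L_1$, $p'\in W'$, $q\in(H+L)^\perp$; then $O(n)_{L_1^\perp}(v)=S^{n-1}\cap\big(L_1+(p'+q)\big)$, which lies in an affine subspace of dimension $\dim L_1=\ell+1\ge\dim H+2\ge 3$. On that affine subspace, $E\cap\big(L_1+(p'+q)\big)$ is nonempty, closed, and invariant under the restriction of $O(n)_{L^\perp}$, which acts there as the pointwise stabilizer of the line $\lin\{w_h\}+(p'+q)$ (it fixes $w_h$, $p'$, $q$ and acts orthogonally on $L$). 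Hence, with this affine subspace playing the role of $\R^n$ exactly as in the proof of Theorem~\ref{teo:sphericalsym_lines}, Lemma~\ref{lemma_MS} will give $E\cap\big(L_1+(p'+q)\big)=S^{n-1}\cap\big(L_1+(p'+q)\big)$, hence $O(n)_{L_1^\perp}(v)\subseteq E$, as soon as we produce an isometry of $L_1+(p'+q)$ that leaves $E$ invariant and does not fix the line $\lin\{w_h\}+(p'+q)$.

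Such an isometry is $\phi:=R_H\circ g\circ R_H$, where $g\in O(n)_{L^\perp}$ is the rotation by $\pi/2$ in the plane $\lin\{e_h,e_{h+1}\}$. The vector $e_{h+1}$ belongs to $L\ominus\lin\{e_1,\dots,e_h\}$, which is nonzero precisely because $\dim H<\dim L$, and this is the only place that hypothesis is used. A computation with the canonical form, of the kind carried out in the proof of Lemma~\ref{pr_two_i_reflections}, shows that $\phi$ fixes each $w_j$ with $s<j<h$ (hence $W'$ pointwise) and fixes $(H+L)^\perp$ pointwise, so that $\phi$ maps $L_1$ onto itself and therefore preserves $L_1+(p'+q)$, while $\phi(w_h)$ acquires a nonzero $e_{h+1}$-component, so $\phi$ moves the line $\lin\{w_h\}+(p'+q)$. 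Since $\phi=R_H\circ g\circ R_H=R_{H^\perp}\circ g\circ R_{H^\perp}$, the set $E$ is $\phi$-invariant under either hypothesis on $E$. Thus Lemma~\ref{lemma_MS} applies and $E$ is invariant under $O(n)_{L_1^\perp}$. Finally $H\cap L_1^\perp\subseteq H\cap L^\perp=\{o\}$, $\dim H<\dim L_1$, $H+L_1=H+L$, and $\dim(H+L_1)-\dim L_1=\delta-1$, so the inductive hypothesis applied to the pair $(H,L_1)$ gives the invariance of $E$ under $O(n)_{(H+L_1)^\perp}=O(n)_{(H+L)^\perp}$, completing the induction (the base case $\delta=0$ being the case $H\subseteq L$).

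The main obstacle is the explicit verification, via \eqref{angles}, that the conjugate $\phi=R_H\circ g\circ R_H$ has the three properties used above: fixing $W'$ and $(H+L)^\perp$ pointwise, preserving $L_1$, and tilting $w_h$ back out of the line $\lin\{w_h\}$. The point is that $R_H$ rotates (or reflects) each coordinate pair $\{e_j,w_j\}$, $j>s$, through the principal angle $\alpha_j$; conjugating by a $g$ supported on $\lin\{e_h,e_{h+1}\}$ leaves the pairs with $j<h$ untouched, so $R_H$ cancels with itself there, but through the spare direction $e_{h+1}\in L$ — one not used in the principal-angle description of $H$ — it turns the $j=h$ pair into a genuine rotation that carries $w_h$ partly into $L$. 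It is exactly the availability of that spare direction, i.e.\ $\dim L>\dim H$, that lets the single reflection $R_H$ do the work of a full rotation group one dimension at a time; the remainder of the argument is bookkeeping around Lemma~\ref{lemma_MS}.
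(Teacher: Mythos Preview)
Your proof is correct and takes a genuinely different route from the paper's.  The paper argues directly that the orbit $G(x)$ of the group generated by $O(n)_{L^\perp}$ and $R_H$ contains the full sphere $S^{n-1}\cap(H+L+x)$: it invokes the closed-subgroup theorem to make $G(x)$ a $C^\infty$ manifold, then shows $G(x)\cap S_x$ is open in $S_x$ by a tangent-space dimension count comparing $L\cap p^\perp + R_HL\cap p^\perp$ with $M\cap p^\perp$, and closes with a connectedness argument.  No induction, but the manifold machinery is essential.

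Your approach instead climbs one dimension at a time by induction on $\delta=\dim(H+L)-\dim L$, using the principal-angle basis to manufacture a single explicit conjugate $\phi=R_HgR_H$ (with $g$ a rotation in the plane $\lin\{e_h,e_{h+1}\}$) that tilts the extra direction $w_h$ into $L$; Lemma~\ref{lemma_MS} then upgrades $O(n)_{L^\perp}$-invariance to $O(n)_{L_1^\perp}$-invariance on each affine slice.  The verification that $\phi$ fixes $W'$ and $(H+L)^\perp$, preserves $L_1$, and moves $\lin\{w_h\}$ is straightforward from the block structure of $R_H$ in this basis (and your use of $\dim L>\dim H$ to guarantee the spare coordinate $e_{h+1}\in L$ is exactly right).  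Since Lemma~\ref{lemma_MS} has an elementary topological proof, your argument avoids the Lie-group/manifold layer entirely; the price is the explicit coordinate computation and the inductive bookkeeping.  The observation that only conjugates $R_HgR_H=R_{H^\perp}gR_{H^\perp}$ are used cleanly handles the $R_{H^\perp}$ variant, paralleling the paper's one-line appeal to \eqref{reflection_wrt_perp}.
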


\begin{proof}
Let $G$ be the closure of the subgroup of $O(n)$ generated by $O(n)_{L^\perp}$ and $R_H$. The first conclusion of the lemma is equivalent to the statement that for all $x\in E$, the orbit $G(x)$ contains the orbit $O(n)_{(H+L)^\perp}(x)$. The latter orbit is  $\Sn\cap(H+L+x)$, and we claim that it coincides with $S_x=\Sn\cap (M+x)$, where $M=L+R_HL$.  (See Figure~2, where for the sake of illustration, both $H$ and $L$ are 1-dimensional.)  It suffices to prove that $M=H+L$. To this end, we use the easily-proved identity $A^{\perp}\cap B^{\perp}=(A+B)^{\perp}$, where $A$ and $B$ are subspaces of $\R^n$, to conclude that the assumption $H\cap L^\perp =\{o\}$ is equivalent to $H^\perp + L=\R^n$. Then $L|H=(H^\perp + L)|H=\R^n|H=H$.  Hence, if $z\in H+L$, then $z=x|H+y$ for some $x,y\in L$. Thus
$$z=(x/2+y)+(x|H-x/2)=(x/2+y)+R_H(x/2)\in L+R_HL.$$
This shows that $H+L\subset M$. On the other hand, the definition of $R_H$ implies that $R_HL\subset (L|H)+L\subset H+L$ and hence that $M\subset H+L$.  This proves the claim.

\begin{figure}[htb]\label{fig2}
\begin{center}
\epsfig{file=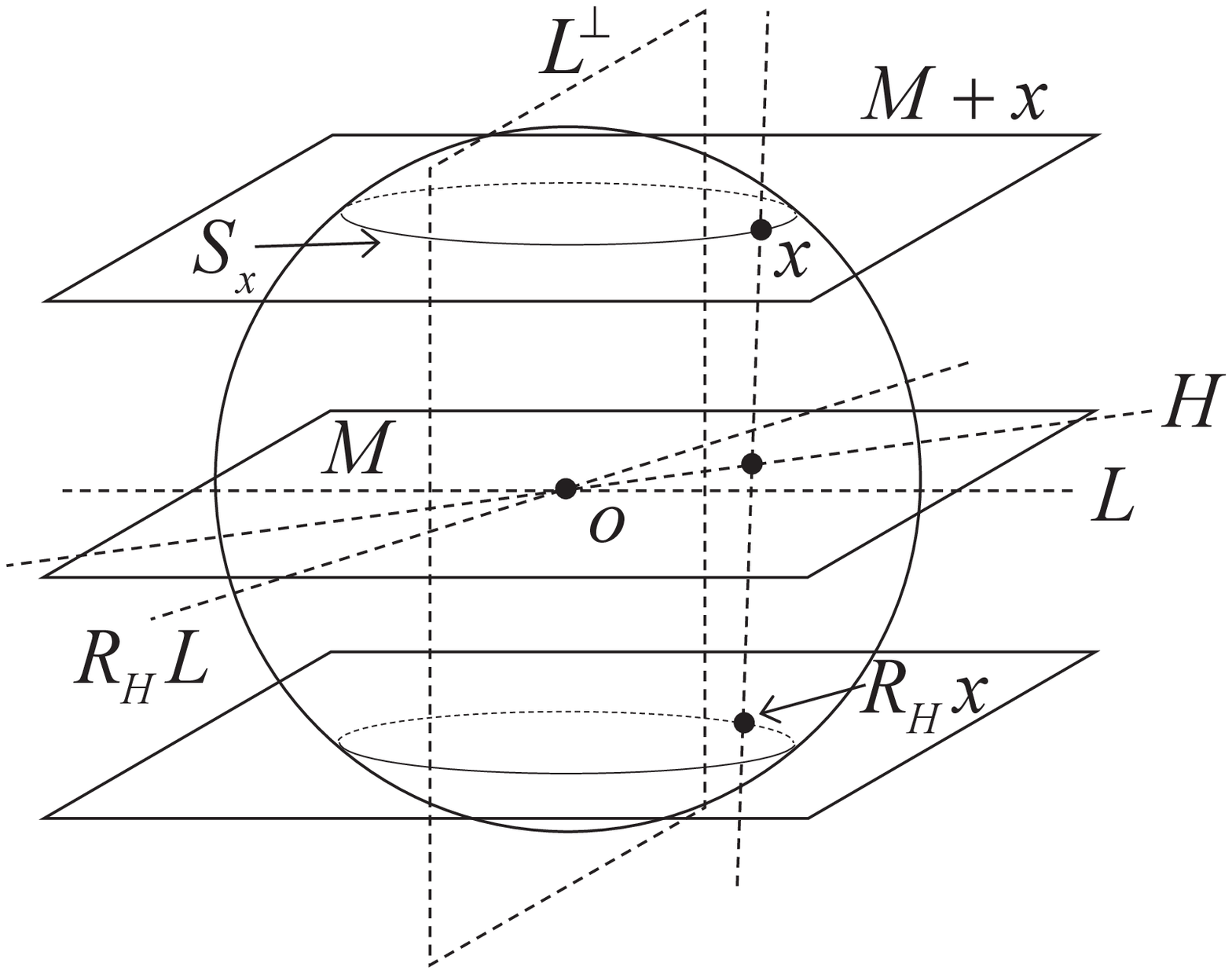, width=13.5cm}
\end{center}
\vspace{-0.3in}
\caption{}
\end{figure}

Let $x\in E$.  We have to show that $S_x\subset G(x)$. If $x\in M^\perp$, then $S_x=\{ x\}$ and there is nothing to prove. Therefore we may assume that $x\notin M^\perp$, in which case $S_x$ is of dimension $\dim M-1$. We may also assume that $\dim H>0$, since otherwise $H+L=L$ and the first conclusion of the lemma is trivially true.  Then $\dim L>1$, which in turn yields $\dim M=\dim\,(H+L)>1$, so $S_x$ is connected.  In order to prove that $G(x)$ contains $S_x$, we shall prove that $G(x)\cap S_x$ is both closed and open relative to $S_x$.

To prove that $G(x)\cap S_x$ is closed relative to $S_x$, let $\{y_k\}$ be a sequence of points in $G(x)\cap S_x$ converging to $y$. By the definition of $G(x)$, for each $k$, there is a  $\phi_k\in O(n)$ such that $\phi_k(x)=y_k$. Since $O(n)$ is compact, there exists a subsequence $\{\phi_{k_m}\}$ of $\{\phi_{k}\}$ converging to $\phi\in O(n)$. Clearly $\phi(x)=y$, so $y\in G(x)$, and $y\in S_x$ because $S_x$ is closed.

It remains to show that $G(x)\cap S_x$ is open relative to $S_x$.   By the closed-subgroup theorem (see, for example, \cite[Theorem~3.42]{War}), $G$ is a Lie group, because it is a closed subgroup of a Lie group.  Since $G$ is a $C^\infty$ manifold and the map $a:G\to G(x)$ given by $a(g)=g(x)$ is $C^\infty$, the orbit $G(x)$ is also a $C^\infty$ manifold.  Moreover, $G(x)\cap S_x$ is also a manifold, because $G(x)\subset S_x\cup R_H S_x=S_x\cup  S_{R_H x}$, and the manifolds $S_x$ and $S_{R_H x}$ either coincide or are disjoint.  Since $G(x)\cap S_x$ is a submanifold of $S_x$, it is open relative to $S_x$ if (and only if) the dimension of $G(x)\cap S_x$ equals the dimension $\dim M-1$ of $S_x$.

Let $p\in G(x)\cap S_x$. The orbit of $p$ with respect to each closed subgroup of $G$ is a submanifold of $G(x)$. Two such subgroups are natural for our purposes: $O(n)_{L^\perp}$ and its conjugate $R_HO(n)_{L^\perp}R_H$. The orbits of $p$ with respect to them are $\Sn \cap (L+p)$ and $\Sn \cap (R_HL+p)$, respectively, both of which are contained in $G(x)\cap S_x$.  Denote by $T_pG(x)$ and $T_pS_x$ the tangent spaces to $G(x)$ and $S_x$, respectively, at $p$. We know that $T_pS_x = M\cap p^\perp$.  Also, $T_pG(x)$ contains both $L\cap p^\perp$ and $R_HL\cap p^\perp$, the tangent spaces to the submanifolds $\Sn \cap (L+p)$ and $\Sn \cap (R_HL+p)$ just mentioned. The tangent space to a $C^\infty$ manifold at any point is a vector space whose dimension equals that of the manifold; see, for example, \cite[p.~12]{War} and \cite[Corollary to Theorem~1.17]{War}. With these facts at hand, it is enough to show that we can choose $p$ so that
\begin{equation}\label{dimcond0}
\dim \left( (L\cap p^\perp) + (R_HL\cap p^\perp) \right) \geq \dim M -1.
\end{equation}
Now
$$
\dim \left( (L\cap p^\perp) + (R_HL\cap p^\perp) \right) = \dim (L\cap p^\perp) + \dim (R_HL\cap p^\perp) -\dim (L\cap R_HL\cap p^\perp)
$$
and
$$\dim M=\dim L + \dim R_HL -\dim (L\cap R_HL).$$
The previous two equations show that \eqref{dimcond0} is equivalent to
\begin{equation}\label{dimcond}
\dim L-\dim (L\cap p^\perp)+\dim R_HL-\dim (R_HL\cap p^\perp) \leq
\dim (L\cap R_HL)-\dim (L\cap R_HL\cap p^\perp)+1.
\end{equation}
Next, we observe that the following equations hold:
\begin{eqnarray*}
 \dim L-\dim (L\cap p^\perp)&=&\left\{ \begin{array}{ll}0,\ \  {\text{if}} \ p\in L^\perp \\ 1, \ \ {\text{if}}\  p\notin L^\perp, \end{array}\right. \\
 \dim R_HL-\dim (R_HL\cap p^\perp)&=&\left\{ \begin{array}{ll}0,\ \  {\text{if}} \ p\in R_HL^\perp \\ 1, \ \ {\text{if}}\  p\notin R_HL^\perp, \end{array}\right. \\
 \dim (L\cap R_HL)-\dim (L\cap R_HL\cap p^\perp)&=&\left\{ \begin{array}{ll}0,\ \  {\text{if}} \ L\cap R_HL\subset p^\perp \\ 1, \ \ {\text{if}}\  L\cap R_HL\not\subset p^\perp.\end{array}\right. \\
\end{eqnarray*}
It follows that \eqref{dimcond} holds for any $p$ such that $L\cap R_HL\not\subset p^\perp$, or equivalently, for any $p\not\in L^\perp+R_H L^\perp$.

We have $x\notin M^\perp=L^\perp\cap R_H L^\perp$, and thus either $x\notin L^\perp$ or $x\notin R_H L^\perp$. We claim that if $x\notin L^\perp$, then $\Sn\cap(L+x)$ contains such a $p$, while if $x\notin R_H L^\perp$, then $\Sn\cap(R_H L+x)$ contains such a $p$. It is enough to deal with the first case, since the proof for the second one is similar.
Thus we need to prove that if $x\notin L^\perp$ then
\begin{equation}\label{find_p}
 \Sn\cap (L+x)\not\subset L^\perp+R_H L^\perp.
\end{equation}
Suppose that \eqref{find_p} is false. Then $L^\perp+R_H L^\perp$ contains all vectors which are differences of points in $\Sn\cap (L+x)$. The assumption $x\notin L^\perp$ implies that $\Sn\cap (L+x)$ has dimension $\dim L-1>0$. It follows that $L^\perp+R_H L^\perp$ contains $L$. Therefore $L^\perp+R_H L^\perp=\R^n$, since $L^\perp+R_H L^\perp$ contains both $L$ and $L^\perp$.  From $H \cap L^\perp=\{o\}$ and $\dim H < \dim L$, we obtain
$$\dim(H^\perp\cap L)=\dim H^\perp+\dim L-\dim(H^\perp+L)
=n-\dim H+\dim L-n>0,$$
so there exists $w\neq o$ in $H^\perp\cap L$. Clearly, $R_Hw=-w$ and $w\in L\cap R_HL$. Thus $L\cap R_HL\neq\{o\}$ and $L^\perp+R_H L^\perp=(L\cap R_HL)^\perp\neq\R^n$. This contradiction proves \eqref{find_p}.

When $R_{H^\perp} E=E$, the proof is substantially identical. Due to \eqref{reflection_wrt_perp}, we have $R_HL=R_{H^{\perp}}L$, allowing us to repeat the arguments above with $R_H$ replaced by $R_{H^\perp}$.
\end{proof}

The next result provides for $i\in \{2,\dots,n-2\}$ a possible choice of $i$-dimensional subspaces $H_1,\dots, H_k$ such that the reflections $R_{H_1},\dots,R_{H_k}$ generate $O(n)$.

\begin{thm}\label{finthm}
Let $k\ge 3$ and let $H_j\in{\mathcal{G}}(n,i)$, $j=1,\dots,k$. If $2\le i\le n/2$, assume that

{\noindent{\rm{(i)}}} $H_1$, $H_2$, and $H_3$ are as in \eqref{choice_of_the_planes},

{\noindent{\rm{(ii)}}}  $H_1+\dots+H_k=\R^n$, and

{\noindent{\rm{(iii)}}} for each $j=3,\dots,k-1$,
$$
H_{j+1}\cap\left(H_1+\dots+H_{j}\right)^\perp=\{o\}.
$$
If $n/2< i\leq n-2$, assume that {\em (i)--(iii)} are satisfied with each $H_j$ replaced by $H_j^\perp$.  If $E\subset \Sn$ is nonempty, closed, and such that $R_{H_j} E=E$ for $j=1,\dots,k$, then $E=\Sn$.
\end{thm}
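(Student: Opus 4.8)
The plan is to treat first the case $2\le i\le n/2$ and deduce the case $n/2<i\le n-2$ afterwards by passing to orthogonal complements. In the main case I would argue that $E$ equals $\Sn$ by building up, subspace by subspace, the conclusion that $G$, the closure of the group generated by $R_{H_1},\dots,R_{H_k}$, acts transitively on $\Sn\cap(P_j+x)$ for each $x\in\Sn$, where $P_j=H_1+\cdots+H_j$. Concretely, I would prove by induction on $j$, for $j=3,\dots,k$, the statement
\begin{equation*}
\text{for all }x\in E,\quad \Sn\cap(P_j+x)\subset E.
\end{equation*}
When $j=k$, hypothesis (ii) gives $P_k=\R^n$, hence $\Sn\cap(P_k+x)=\Sn$ for any $x\in E$ (and $E$ is nonempty), so $E=\Sn$, which is the theorem.

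The base case $j=3$ is exactly Lemma~\ref{pr_sh1sh2sh3}: since $H_1,H_2,H_3$ are as in \eqref{choice_of_the_planes} and (after choosing, via the linear-independence hypothesis built into that lemma) the $\alpha_m$ can be taken with $\pi,\alpha_1,\dots,\alpha_i$ linearly independent over $\Q$, the lemma yields $\Sn\cap(\lin\{e_1,\dots,e_{2i}\}+x)=\Sn\cap(P_3+x)\subset E$ for every $x\in E$, using that $P_3=\lin\{e_1,\dots,e_{2i}\}$ from \eqref{choice_of_the_planes}. For the inductive step, suppose $\Sn\cap(P_j+x)\subset E$ for all $x\in E$; this says precisely that $E$ is rotationally symmetric with respect to $P_j^\perp$, i.e. invariant under every element of $O(n)_{P_j^\perp}$ (after intersecting orbits appropriately — note $O(n)_{P_j^\perp}(x)=\Sn\cap(P_j+x)$). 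Now apply the symmetry extension lemma, Lemma~\ref{extension_lemma}, with $H=H_{j+1}$ and $L=P_j$: hypothesis (iii) says $H_{j+1}\cap P_j^\perp=\{o\}$, and $\dim H_{j+1}=i<\dim P_j$ holds because $P_j\supset H_1+H_2+H_3=\lin\{e_1,\dots,e_{2i}\}$ has dimension at least $2i>i$ (this is where $i\ge 2$ and $k\ge 3$ enter — we always have the three ``seed'' subspaces first). Since $E$ is also $H_{j+1}$-symmetric by hypothesis, Lemma~\ref{extension_lemma} gives that $E$ is invariant under every element of $O(n)_{(H_{j+1}+P_j)^\perp}=O(n)_{P_{j+1}^\perp}$, which is the statement $\Sn\cap(P_{j+1}+x)\subset E$ for all $x\in E$. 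This closes the induction.

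For the case $n/2<i\le n-2$, the hypotheses (i)--(iii) hold with each $H_j$ replaced by $H_j^\perp$, and now $\dim H_j^\perp=n-i$ satisfies $1\le n-i<n/2$, so the argument just given applies to the subspaces $H_j^\perp$ and the reflections $R_{H_j^\perp}$. But $E$ is assumed $H_j$-symmetric, i.e. invariant under $R_{H_j}$, and the ``same conclusion'' clause of Lemma~\ref{extension_lemma} (invariance under $R_{H^\perp}$ in place of $R_H$) together with the fact that the base Lemma~\ref{pr_sh1sh2sh3} is stated for both $R_{H_j}E=E$ and $R_{H_j^\perp}E=E$ means the entire inductive argument goes through verbatim with $H_j$ replaced by $H_j^\perp$ throughout; the conclusion $\bigcup_j H_j^\perp$-analogue of (ii) gives $H_1^\perp+\cdots+H_k^\perp=\R^n$, hence $E=\Sn$.

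The main obstacle, and the step needing the most care, is verifying the dimension condition $\dim H_{j+1}<\dim P_j$ needed to invoke Lemma~\ref{extension_lemma} at every stage, together with checking that $H_{j+1}\cap P_j^\perp=\{o\}$ is exactly hypothesis (iii) and not some stronger nondegeneracy. The dimension bound is safe because $P_j\supseteq P_3=\lin\{e_1,\dots,e_{2i}\}$ for all $j\ge 3$, so $\dim P_j\ge 2i>i=\dim H_{j+1}$; one only needs to note that the ordering in the hypotheses puts the three structured subspaces first, which is legitimate since the conclusion is symmetric in the $H_j$ and (i)--(iii) can be arranged by relabeling. A secondary subtlety is the transition between ``invariant under all of $O(n)_{P_j^\perp}$'' and ``$\Sn\cap(P_j+x)\subset E$'': these are equivalent because the orbit of a point $x$ under $O(n)_{P_j^\perp}$ is precisely the sphere $\Sn\cap(P_j+x)$ (the set of unit vectors with the same $P_j^\perp$-component as $x$), as recorded in the Preliminaries; this equivalence is what lets the induction hypothesis feed directly into the hypotheses of the symmetry extension lemma.
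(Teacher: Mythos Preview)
Your proof is correct and follows essentially the same approach as the paper's: base case via Lemma~\ref{pr_sh1sh2sh3} (using $P_3=\lin\{e_1,\dots,e_{2i}\}$), inductive step via the symmetry extension lemma with $H=H_{j+1}$, $L=P_j$, the dimension check $\dim P_j\ge 2i>i$, and the dual case handled by invoking the $R_{H^\perp}$-clauses of both lemmas. Two minor expository points: the $\alpha_m$ are fixed by hypothesis~(i) rather than ``chosen,'' and your aside about relabeling is unnecessary (and slightly misleading) since hypotheses (i)--(iii) already prescribe the ordering.
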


\begin{proof}
Suppose that $2\leq i\leq n/2$. Note that $H_1+H_2+H_3=\lin\{e_1,\dots,e_{2i}\}$.  Therefore the assumptions on $E$ and hypothesis (i), together with Lemma~\ref{pr_sh1sh2sh3}, imply that $E$ is invariant under any element of $O(n)_{(H_1+H_2+H_3)^\perp}$.  Let $3\le j\le k-1$ and suppose that $E$ is invariant under any element of $O(n)_{(H_1+\dots+H_j)^\perp}$.  Since
$$
\dim H_{j+1}=i<2i=\dim(H_1+H_2+H_3)\leq\dim(H_1+\dots+H_{j}),
$$
we may by hypothesis (iii) apply Lemma~\ref{extension_lemma} with $H$ and $L$ replaced by $H_{j+1}$ and $H_1+\dots+H_{j}$, respectively, to conclude that $E$ is invariant under any element of $O(n)_{(H_1+\dots+H_{j+1})^\perp}$.  In view of hypothesis (ii), the proof is completed by induction on $j$. When $n/2< i\leq n-2$, the previous argument can be repeated with each $H_j$ replaced by $H_j^\perp$.
\end{proof}

\begin{cor}\label{cor21June}
Let $1\leq i\leq n-1$ and let
$$
k=
\begin{cases}
n, & \text{if $i=1$ or $i=n-1$,}\\
\lceil n/i\rceil+1, & \text{if $1<i\leq n/2$,}\\
\lceil n/(n-i)\rceil+1, & \text{if $n/2\leq i<n-1$.}
\end{cases}
$$
There exist $H_j\in{\mathcal{G}}(n,i)$, $j=1,\dots,k$, such that if $E\subset \Sn$ is nonempty, closed, and such that $R_{H_j} E=E$ for $j=1,\dots, k$, then $E=\Sn$.  Hence, if $F\subset \R^n$ is closed and $R_{H_j} F=F$ for $j=1,\dots, k$ (or $K\in{\mathcal{K}}^n_n$ and $R_{H_j}K=K$ for $j=1,\dots, k$), then $F$ is a union of $o$-symmetric spheres (or $K$ is an $o$-symmetric ball, respectively).
\end{cor}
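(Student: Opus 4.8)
The plan is to prove the statement about $E$ by exhibiting, in each of the three ranges of $i$, explicit subspaces $H_1,\dots,H_k$ to which one of the results already established applies, and then to deduce the assertions about $F$ and $K$ by slicing with spheres centred at $o$. For $i=1$ I would produce $n$ lines satisfying hypotheses (i)--(iii) of Theorem~\ref{teo:sphericalsym_lines}: fix $\alpha$ with $\alpha/\pi$ irrational and put $H_1=\lin\{e_1\}$, $H_2=\lin\{(\cos\alpha)\,e_1+(\sin\alpha)\,e_2\}$, and $H_j=\lin\{e_{j-1}+e_j\}$ for $j=3,\dots,n$. Then (i) holds for $H_1,H_2$; (ii) holds since $e_1,e_2,\dots,e_n$ are recovered one at a time; and (iii) holds because consecutive lines in the list are non-orthogonal, so the graph on $\{H_1,\dots,H_n\}$ whose edges join non-orthogonal pairs is a path, hence connected. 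For $i=n-1$ I would take $H_j=\ell_j^{\perp}$, with $\ell_1,\dots,\ell_n$ the lines just built; the unit normals of $H_1,\dots,H_n$ are then parallel to $\ell_1,\dots,\ell_n$ and satisfy conditions (i)--(iii) of \cite[Proposition~4.2]{BCD}, so $\cl\langle R_{H_1},\dots,R_{H_n}\rangle=O(n)$, whence any nonempty closed $E$ with $R_{H_j}E=E$ for all $j$ is $O(n)$-invariant and equals $\Sn$.

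For $1<i\le n/2$ we have $k=\lceil n/i\rceil+1\ge3$, and I would build subspaces fitting Theorem~\ref{finthm}. First fix an increasing sequence $\alpha_1<\dots<\alpha_i$ in $(0,\pi/2)$ with $\pi,\alpha_1,\dots,\alpha_i$ linearly independent over $\Q$ (the sequences for which this fails form a Lebesgue-null subset of $(0,\pi/2)^i$). Take $H_1,H_2,H_3$ as in \eqref{choice_of_the_planes}, so $H_1+H_2+H_3=\lin\{e_1,\dots,e_{2i}\}$ has dimension $2i\le n$. Then define $H_4,\dots,H_k$ recursively: whenever $\dim(H_1+\dots+H_j)=mi<n$, put $r=\min\{i,n-mi\}$ and
$$
H_{j+1}=\lin\bigl(\{e_\ell+e_{mi+\ell}:1\le\ell\le r\}\cup\{e_{r+1},\dots,e_i\}\bigr),
$$
an $i$-dimensional subspace (the second set lies in $H_1+\dots+H_j$ because $i\le mi$, while the first uses only coordinates outside it). A vector of $H_{j+1}$ all of whose $e_1,\dots,e_{mi}$ coordinates vanish is $o$, so $H_{j+1}\cap(H_1+\dots+H_j)^{\perp}=\{o\}$, which is hypothesis (iii); and $H_1+\dots+H_{j+1}=\lin\{e_1,\dots,e_{\min\{(m+1)i,\,n\}}\}$. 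Since the dimension starts at $2i$ and grows by $i$ per step until it reaches $n$, exactly $\lceil n/i\rceil-2$ further subspaces make $H_1+\dots+H_k=\R^n$, hypothesis (ii), with $k=\lceil n/i\rceil+1$. Theorem~\ref{finthm} then gives $E=\Sn$.

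For $n/2\le i<n-1$, put $i'=n-i$; then $2\le i'\le n/2$, and the construction of the previous paragraph, run with $i'$ in place of $i$, yields $i'$-dimensional subspaces $H_1',\dots,H_k'$ with $k=\lceil n/i'\rceil+1=\lceil n/(n-i)\rceil+1$ satisfying hypotheses (i)--(iii) of Theorem~\ref{finthm} with each $H_j$ replaced by $H_j'$; taking $H_j=(H_j')^{\perp}\in{\mathcal G}(n,i)$, so that $H_j^{\perp}=H_j'$, and invoking the second clause of Theorem~\ref{finthm} gives $E=\Sn$ (the value $i=n/2$ being covered, consistently, by the previous paragraph). For the geometric consequences: if $F\subset\R^n$ is closed with $R_{H_j}F=F$ for all $j$, then for each $t>0$ with $F\cap t\Sn\ne\emptyset$ the rescaled set $t^{-1}(F\cap t\Sn)$ is a nonempty closed subset of $\Sn$ invariant under every $R_{H_j}$ (these are linear isometries, so they commute with dilations and preserve $t\Sn$), hence it equals $\Sn$ and $F\cap t\Sn=t\Sn$; thus $F$ is the union of the $o$-symmetric spheres it meets. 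If moreover $K\in{\mathcal{K}}^n_n$ with $R_{H_j}K=K$, then $K$ is such a union; since $K$ is convex with interior points it contains a sphere $t\Sn$ with $t>0$, joining antipodal points shows every smaller sphere lies in $K$, and then compactness gives $K=RB^n$ with $R=\max\{\|x\|:x\in K\}>0$, an $o$-symmetric ball.

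The substantive machinery---Theorems~\ref{teo:sphericalsym_lines} and \ref{finthm} and the symmetry extension lemma---being already available, the main effort is the bookkeeping in the middle two paragraphs: choosing $H_4,\dots,H_k$ (and their perp analogues) so that the transversality condition (iii) of Theorem~\ref{finthm}, which constrains $(H_1+\dots+H_j)^{\perp}$ rather than $H_1+\dots+H_j$ itself, holds at the same time as the dimension increases by the full amount $i$ at each step, and checking that precisely $\lceil n/i\rceil+1$ (respectively $\lceil n/(n-i)\rceil+1$) subspaces are used; one must also verify that each boundary value $i\in\{1,\,n-1,\,n/2\}$ is handled by the appropriate one of the three statements.
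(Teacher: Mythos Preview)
Your proposal is correct and follows essentially the same route as the paper: invoke Theorem~\ref{teo:sphericalsym_lines} for $i=1$, \cite[Proposition~4.2]{BCD} for $i=n-1$, Theorem~\ref{finthm} (directly or via $H_j\mapsto H_j^\perp$) for the intermediate ranges, and then slice by spheres to obtain the statements about $F$ and $K$. Your recursive construction of $H_4,\dots,H_k$ is in fact more explicit than the paper's argument, which only asserts that hypothesis~(iii) of Theorem~\ref{finthm} together with $\dim(H_1+H_2+H_3)=2i$ forces $\dim(H_1+\cdots+H_k)\ge n$; your choice makes that existence claim concrete and simultaneously verifies the transversality condition~(iii) at each stage.
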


\begin{proof}
Let $E\subset \Sn$ satisfy the stated hypotheses.  If $i=1$ or $i=n-1$, $E=\Sn$ is a consequence of Theorem~\ref{teo:sphericalsym_lines} or of \cite[Proposition~4.2]{BCD}, respectively. Suppose that $1<i\leq n/2$ and $k=\lceil n/i\rceil+1$.  Then if $H_j\in{\mathcal{G}}(n,i)$, $j=1,\dots,k$, are as in Theorem~\ref{finthm}, hypothesis (iii) there and $\dim(H_1+H_2+H_3)=2i$ imply that
$$\dim(H_1+\cdots+H_k)=2i+(k-3)i\ge 2i+(n/i-2)i=n$$
and hence hypothesis (ii), so it follows from that theorem that $E=\Sn$.  Suppose that $n/2\leq i<n-1$ and $k\ge\lceil n/(n-i)\rceil+1$. Choose $H_j\in{\mathcal{G}}(n,i)$, $j=1,\dots,k$, so that $H_j^{\perp}\in{\mathcal{G}}(n,n-i)$, $j=1,\dots,k$, are as in Theorem~\ref{finthm}.  Then $E=\Sn$ follows from the already established result for $1<i\leq n/2$ on replacing $i$ by $n-i$.

Let $F\subset \R^n$ be closed and satisfy $R_{H_j} F=F$ for $j=1,\dots, k$.  If $F$ is not a union of $o$-symmetric spheres, there is an $r>0$ such that if $A=F\cap rS^{n-1}$, then $A$ is a nonempty, closed, proper subset of $rS^{n-1}$.  It follows that the set $E=(1/r)A\subset S^{n-1}$ satisfies $R_{H_j} E=E$ for $j=1,\dots, k$, but $E\neq S^{n-1}$, contradicting what has been proved.  The proof is completed by noting that if $K\in{\mathcal{K}}^n_n$ is a union of $o$-symmetric spheres, it must be an $o$-symmetric ball.
\end{proof}

\section{Full rotational symmetry via rotational symmetries in finitely many subspaces}\label{rotationalsymmetry}

This section focuses on finding finite sets of $i$-dimensional subspaces such that (full) rotational symmetries in these subspaces generate full rotational symmetry.

\begin{thm}\label{new}
Let $H_1,\dots,H_k$ be subspaces in $\R^n$ such that $1\le \dim H_j\le n-2$ for $j=1,\dots,k$.  The following statements are equivalent.

\noindent{\rm{(i)}} $H_1^{\perp}+\cdots+H_k^{\perp}=\R^n$ and $\{H_1^{\perp},\dots,H_k^{\perp}\}$ cannot be partitioned into two mutually orthogonal nonempty subsets.

\noindent{\rm{(ii)}} If $E\subset S^{n-1}$ is nonempty and closed, and for each $j=1,\dots,k$ and $x\in E$, we have $\Sn\cap (H_j^{\perp}+x)\subset E$, then $E=S^{n-1}$.

\noindent{\rm{(iii)}}
If $F\subset \R^n$ is closed and invariant under any rotation that fixes $H_j$ for each $j=1,\dots,k$, then $F$ is a union of $o$-symmetric spheres.

\noindent{\rm{(iv)}}
If $K\in{\mathcal{K}}^n_n$ is rotationally symmetric with respect to $H_j$ for each $j=1,\dots,k$, then $K$ is an $o$-symmetric ball.
\end{thm}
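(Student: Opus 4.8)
The plan is to prove the cycle of implications (i)$\Rightarrow$(ii)$\Rightarrow$(iii)$\Rightarrow$(iv)$\Rightarrow$(i), with essentially all the work in (i)$\Rightarrow$(ii). First, note that the condition on $E$ in (ii) --- that $\Sn\cap(H_j^{\perp}+x)\subseteq E$ whenever $x\in E$ --- is, since $\dim H_j^{\perp}\ge 2$, exactly invariance of $E$ under every element of $O(n)_{H_j}$ (equivalently $SO(n)_{H_j}$), because the orbit of $x\in\Sn$ under $O(n)_{H_j}$ is precisely $\Sn\cap(H_j^{\perp}+x)$. Write $L_j=H_j^{\perp}$. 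The second half of (i) is equivalent to connectedness of the graph on $\{1,\dots,k\}$ that joins $a$ to $b$ exactly when $L_a\not\perp L_b$, so after relabelling we may assume that for each $m=2,\dots,k$ the subspace $L_m$ is not orthogonal to $L_1+\cdots+L_{m-1}$, i.e. $L_m\not\subseteq(L_1+\cdots+L_{m-1})^{\perp}$.

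The core of the argument is then to prove by induction on $m$ that $E$ is invariant under every element of $O(n)_{(L_1+\cdots+L_m)^{\perp}}$. The case $m=1$ is the hypothesis; when $m=k$, the first half of (i) gives $L_1+\cdots+L_k=\R^n$, so this says $E$ is invariant under $O(n)_{\{o\}}=O(n)$, and since $E$ is a nonempty subset of $\Sn$ we get $E=\Sn$. For the inductive step, put $M=L_1+\cdots+L_{m-1}$; then $\dim M\ge\dim L_1=n-\dim H_1\ge 2$, $E$ is invariant under $O(n)_{M^{\perp}}$ and under $O(n)_{L_m^{\perp}}$, and $L_m\not\subseteq M^{\perp}$. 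I would enlarge $M$ to $M+L_m$ one dimension at a time: at a stage at which $E$ is invariant under $O(n)_{N^{\perp}}$ for some $N$ with $M\subseteq N\subsetneq M+L_m$, observe that $L_m\not\subseteq N$ (else $M+L_m\subseteq N$) and $L_m\not\subseteq N^{\perp}$ (since $N^{\perp}\subseteq M^{\perp}$); as $L_m$ is a real vector space and hence not the union of the two proper subspaces $L_m\cap N$ and $L_m\cap N^{\perp}$, there is a line $\ell\subseteq L_m$ with $\ell\cap N=\{o\}$ and $\ell\cap N^{\perp}=\{o\}$. The hyperplane reflection $R_{\ell^{\perp}}$ fixes $\ell^{\perp}\supseteq L_m^{\perp}$ pointwise, hence lies in $O(n)_{L_m^{\perp}}$, so $E$ is invariant under $R_{\ell^{\perp}}$; the $R_{H^{\perp}}$-version of the symmetry extension lemma, Lemma~\ref{extension_lemma}, applied with $H=\ell$ and $L=N$ (the required $\ell\cap N^{\perp}=\{o\}$ and $\dim\ell=1<\dim N$ both hold), then shows $E$ is invariant under $O(n)_{(\ell+N)^{\perp}}$, and $\dim(\ell+N)=\dim N+1$. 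Iterating finitely often reaches $N=M+L_m$, completing the step. Granting Lemma~\ref{extension_lemma} (which does the genuinely hard work), the only delicate point is precisely this bookkeeping --- producing a suitable line $\ell$ at every intermediate dimension --- and this is exactly where the non-orthogonality $L_m\not\subseteq M^{\perp}$, hence $L_m\not\subseteq N^{\perp}$ throughout, is indispensable; I expect this to be the main obstacle.

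The implications (ii)$\Rightarrow$(iii)$\Rightarrow$(iv) are routine rescaling arguments. If $F\subseteq\R^n$ is closed and invariant under every rotation fixing each $H_j$, then for each $r>0$ the set $E=(1/r)(F\cap rS^{n-1})$ is a closed subset of $\Sn$ meeting the hypothesis of (ii) (the $SO(n)_{H_j}$-orbit of a point $x\in F\cap rS^{n-1}$ is $rS^{n-1}\cap(H_j^{\perp}+x)$), so $E$ is empty or all of $\Sn$; hence each $F\cap rS^{n-1}$ is empty or all of $rS^{n-1}$, i.e. $F$ is a union of $o$-symmetric spheres. For (iii)$\Rightarrow$(iv): a $K\in{\mathcal K}^n_n$ rotationally symmetric with respect to every $H_j$ is closed and invariant under the rotations fixing each $H_j$, hence a union of $o$-symmetric spheres, and a convex body of that form is an $o$-symmetric ball.

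Finally, for (iv)$\Rightarrow$(i) I would argue contrapositively, exhibiting a non-ball convex body rotationally symmetric with respect to every $H_j$ whenever (i) fails. If $H_1^{\perp}+\cdots+H_k^{\perp}\ne\R^n$, choose a unit vector $w$ in the nonzero subspace $(H_1^{\perp}+\cdots+H_k^{\perp})^{\perp}=H_1\cap\cdots\cap H_k$; then $K=B^n\cap\{x\in\R^n:x\cdot w\ge -1/2\}$ is a convex body that is not an $o$-symmetric ball, yet every element of $SO(n)_{H_j}$ fixes $w$, so a short computation shows $K$ is rotationally symmetric with respect to each $H_j$. If instead $H_1^{\perp}+\cdots+H_k^{\perp}=\R^n$ but $\{H_1^{\perp},\dots,H_k^{\perp}\}$ partitions into nonempty mutually orthogonal subsets with respective spans $U$ and $V$, then $U\perp V$ forces $\R^n=U\oplus V$; each $H_j$ contains one of $U,V$, so each $SO(n)_{H_j}$ fixes one of them pointwise and hence preserves both, whence the duocylinder $K=\{x\in\R^n:\|x|U\|\le 1,\ \|x|V\|\le 1\}$ is a convex body, not a ball, that is rotationally symmetric with respect to every $H_j$. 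This closes the cycle.
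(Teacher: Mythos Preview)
Your proof is correct and follows essentially the same approach as the paper: both prove (i)$\Rightarrow$(ii) by inductively enlarging the subspace of rotational symmetry one line at a time via the symmetry extension lemma (Lemma~\ref{extension_lemma}), with the remaining implications handled by routine rescaling and explicit counterexamples. The only cosmetic differences are in how the auxiliary lines are chosen---you pick $\ell\subset L_m$ avoiding $N$ and $N^{\perp}$ via the ``vector space is not a union of two proper subspaces'' trick, whereas the paper uses the explicit vectors $w_p=v_1+\cdots+v_p$ built from an orthogonal basis---and in the specific non-ball bodies exhibited for (iv)$\Rightarrow$(i).
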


\begin{proof}
(i)$\Rightarrow$ (ii) Suppose that (i) holds. If $E$ satisfies the hypotheses in (ii), then every section of $E$ with a translate of $H_1^{\perp}$ is either empty or a sphere. By (i), there must be a $j_0\in \{2,\dots,k\}$ such that $H_{j_0}^{\perp}$ is not contained in $H_1$, for otherwise $\{H_1\}$ and $\{H_2,\dots,H_k\}$ would be a partition of $\{H_1,\dots,H_k\}$ rendering (i) false.  Without loss of generality, assume that $j_0=2$ and choose an orthogonal basis $v_1,\dots,v_l$ in $H_2^{\perp}$ such that $v_1\not\in H_1$, i.e., $v_1$ is not orthogonal to $H_1^{\perp}$. If $x\in S^{n-1}$, then
$$R_{v_1^\perp} x \in \lin \{v_1\}+x\subset H_2^\perp +x.$$
The hypothesis on $E$ in (ii) with $j=2$ thus implies that if $x \in E$, then $R_{v_1^\perp}x \in E$, that is, that $E$ is symmetric with respect to the hyperplane $v_1^\perp$.  By Lemma~\ref{extension_lemma} with $H=\lin\{v_1\}$ and $L=H_1^{\perp}$, every section of $E$ with a translate of $H_1^{\perp}+\lin\{v_1\}$ is either empty or a sphere.  For $p=1,\dots,l$, define $w_p=v_1+\cdots+v_p$.  It is easy to see that $w_p$ is not orthogonal to $w_1,\dots,w_{p-1}$. Let $F_p=\lin\{w_1,\dots,w_p\}$, $p=1,\dots,l$.  Using Lemma~\ref{extension_lemma} iteratively with $H=\lin\{w_p\}$ and $L=H_1^{\perp}+F_{p-1}$, $p=1,\dots,l$, we conclude that every section of $E$ with a translate of $H_1^{\perp}+F_l=H_1^{\perp}+H_2^{\perp}$ is either empty or a sphere.

Suppose we have shown that every section of $E$ with a translate of $J_r=H_1^{\perp}+\cdots+H_r^{\perp}$ is either empty or a sphere, where $2\le r\le k-1$.  By (i), there must be a $j_1\in \{r+1,\dots,k\}$ such that $H_{j_1}^{\perp}$ is not contained in $J_r^{\perp}$, for otherwise (i) would be false via the partition $\{H_1,\dots,H_r\}$ and $\{H_{r+1},\dots,H_k\}$.  Without loss of generality, assume that $j_1=r+1$ and apply the argument in the previous paragraph, with $H_1^{\perp}$ and $H_2^{\perp}$ replaced by $J_r$ and $H_{r+1}^{\perp}$, respectively.  This yields that every section of $E$ with a translate of $J_{r+1}$ is either empty or a sphere.  By induction on $r$, we arrive at the conclusion that every section of $E$ with a translate of $J_{k}$ is either empty or a sphere.  But $J_k=H_1^{\perp}+\cdots+H_k^{\perp}=\R^n$, so $E=S^{n-1}$.

(ii)$\Rightarrow$ (iii)$\Rightarrow$ (iv) These implications are proved by the arguments in the last paragraph of the proof of Corollary~\ref{cor21June}.

(iv)$\Rightarrow$ (i)  Suppose that (i) is false and $H_1^{\perp}+\cdots+H_k^{\perp}=\R^n$.  Then there is a partition of $\{1,\dots,k\}$ into two nonempty disjoint sets $I_1$ and $I_2$ such that the subspaces $S_m=\sum\{H_j^{\perp}:j\in I_m\}$, $m=1,2$, are orthogonal.  Let $K=(B^n\cap S_1)\times (B^n\cap S_2)$ and note that $K\in{\mathcal{K}}^n_n$ as
$$S_1+S_2=H_1^{\perp}+\cdots+H_k^{\perp}=\R^n.$$
Let $j\in \{1,\dots,k\}$ and suppose without loss of generality that $j\in I_1$.  Since $H_j^{\perp}$ is orthogonal to $S_2$, we have $K\cap (H_j^{\perp}+x)=B^n\cap (H_j^{\perp}+x)$, if $x|S_2\in B^n\cap S_2$, and $K\cap (H_j^{\perp}+x)=\emptyset$, otherwise.  Therefore $K$ is rotationally symmetric with respect to $H_j$ and since $K$ is not a ball, (iv) is false.  If $H_1^{\perp}+\cdots+H_k^{\perp}=S\neq\R^n$, then $K=(B^n\cap S)\times (B^n\cap S^{\perp})\in{\mathcal{K}}^n_n$ is not a ball and again (iv) is false.
\end{proof}

\begin{cor}\label{cor22June}
Let $1\leq i\leq n-2$ and let $k=\lceil n/(n-i)\rceil$. There exist $H_j\in{\mathcal{G}}(n,i)$, $j=1,\dots,k$, such that the statements in Theorem~\ref{new} hold.
\end{cor}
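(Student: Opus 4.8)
The plan is to reduce everything to Theorem~\ref{new}: since statements (i)--(iv) there are equivalent, it suffices to produce $H_1,\dots,H_k\in\mathcal{G}(n,i)$ for which statement~(i) holds. Writing $G_j:=H_j^{\perp}\in\mathcal{G}(n,n-i)$ and $m:=n-i$ (so $2\le m\le n-1$, using $1\le i\le n-2$, and $k=\lceil n/m\rceil$ is the least integer with $km\ge n$), the task becomes: find $m$-dimensional subspaces $G_1,\dots,G_k$ of $\R^n$ with $G_1+\cdots+G_k=\R^n$ such that $\{G_1,\dots,G_k\}$ cannot be split into two mutually orthogonal nonempty subsets, and then set $H_j=G_j^{\perp}$. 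I would first record that this splitting condition fails exactly when the graph on $\{1,\dots,k\}$ having an edge between $j$ and $j'$ whenever $G_j\not\perp G_{j'}$ is connected; hence it is enough to arrange $\sum_jG_j=\R^n$ together with $G_{j-1}\not\perp G_j$ for $j=2,\dots,k$, so that this graph contains the spanning path $1-2-\cdots-k$.

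Next I would give an explicit ``staircase'' construction. Fix an orthonormal basis $e_1,\dots,e_n$ of $\R^n$ and an angle $\theta\in(0,\pi/2)$, and partition $\{1,\dots,n\}$ into consecutive blocks $B_1,\dots,B_k$ with $|B_1|=\cdots=|B_{k-1}|=m$ and $|B_k|=s:=n-(k-1)m$; the identity $k=\lceil n/m\rceil$ yields precisely $1\le s\le m$. Put $G_1=\lin\{e_t:t\in B_1\}$, and for $2\le j\le k$ let $r_j:=\min B_{j-1}$ (so $r_j\ne\max B_{j-1}$, as $|B_{j-1}|=m\ge2$) and define $G_j$ to be the span of the basis vectors $e_t$ with $t\in B_j\setminus\{\max B_j\}$, the single ``tilted'' vector $\cos\theta\,e_{\max B_j}+\sin\theta\,e_{r_j}$, and---only when $j=k$ and $s<m$---a further $m-s$ of the basis vectors $e_t$ with $t\in B_{k-1}\setminus\{r_k\}$; a count shows $\dim G_j=m$ for all $j$. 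Finally set $H_j=G_j^{\perp}$.

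The verification is then routine. For the spanning property, induction on $p$ gives $G_1+\cdots+G_p\supseteq\lin\{e_t:t\in B_1\cup\cdots\cup B_p\}$: the subspace $G_p$ contributes $e_t$ directly for every $t\in B_p\setminus\{\max B_p\}$, and since $e_{r_p}$ already lies in $G_1+\cdots+G_{p-1}$, the tilted vector of $G_p$ then also produces $e_{\max B_p}$; taking $p=k$ gives $\sum_jG_j=\R^n$. For connectedness, the tilted vector of $G_j$ has inner product $\sin\theta\ne0$ with $e_{r_j}$, and $e_{r_j}$ is one of the un-tilted basis vectors spanning $G_{j-1}$ (or simply a basis vector of $G_1$ when $j=2$), so $G_{j-1}\not\perp G_j$ for $j=2,\dots,k$. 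Thus $H_1,\dots,H_k$ satisfy statement~(i), hence all of (i)--(iv), of Theorem~\ref{new}.

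The one place where care is genuinely needed---and the reason overlapping \emph{coordinate} subspaces alone do not suffice, so that the tilts must be introduced---is the ``tight'' case $m\mid n$. There $k=n/m$ and $\sum_j\dim G_j=n$, so the $G_j$ are forced to form a direct-sum decomposition of $\R^n$ with no slack, and one must check (now $s=m$, so the extra basis vectors are absent) that the tilts alone simultaneously deliver both $\sum_jG_j=\R^n$ and the full chain of non-orthogonalities. A non-constructive alternative also works: a generic $k$-tuple in $\mathcal{G}(n,n-i)^k$ spans $\R^n$ (the condition $km\ge n$ makes this an open, dense condition, nonempty by the construction above) and has all its members pairwise non-orthogonal, automatically so once $n-i>n/2$, since two subspaces of dimension exceeding $n/2$ cannot be orthogonal.
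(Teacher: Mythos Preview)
Your proof is correct and follows the same overall strategy as the paper: both reduce the problem to verifying condition~(i) of Theorem~\ref{new} for the dual subspaces $G_j=H_j^{\perp}\in\mathcal{G}(n,n-i)$, checking that they span $\R^n$ and that the associated non-orthogonality graph is connected. The difference is one of style rather than substance: the paper dispatches the existence in a single sentence, choosing the $H_{j+1}^{\perp}$ inductively so that $H_{j+1}^{\perp}\not\subset(H_1^{\perp}+\cdots+H_j^{\perp})^{\perp}$ and the dimension of the running sum increases by $n-i$ at each step, whereas you give an explicit staircase construction with tilted basis vectors, discuss the tight case $m\mid n$ separately, and add a genericity remark. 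Your version is more concrete and self-contained; the paper's is shorter but relies on the reader to see that the inductive condition simultaneously forces the dimension growth and the connectedness of the graph.
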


\begin{proof}
It suffices to ensure that Theorem~\ref{new}(i) holds.  This is easily accomplished by choosing $H_j\in{\mathcal{G}}(n,i)$ inductively such that for $k=1,2,\dots$, we have
$H_{k+1}^\perp\not\subset(H_1^\perp+\dots +H_k^\perp)^\perp$ and
$$\dim(H_1^{\perp}+\cdots+H_k^{\perp})\ge \min\{k(n-i),n\},$$
until $k(n-i)\ge n$, i.e., $k=\lceil n/(n-i)\rceil$.
\end{proof}

In the narrower context of surfaces in $\R^3$, the first statement in the following simple corollary was observed without proof in \cite[p.~10]{HC} and proved in \cite[Theorem~A]{KK}.

\begin{cor}\label{corSep26}
Let $n\geq3$. If $H_1, H_2\in{\mathcal{G}}(n,1)$ with $H_1\neq H_2$, the statements in Theorem~\ref{new} hold.

No finite $k$ exists such that if $H_j$, $j=1,\dots,k$, are different subspaces with $2\le \dim H_j\le n-2$ for $j=1,\dots,k$, the statements in Theorem~\ref{new} hold.
\end{cor}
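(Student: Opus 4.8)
The plan is to reduce both assertions to condition (i) of Theorem~\ref{new} and then invoke the equivalence proved there. For the first assertion I would check directly that any two distinct lines through the origin satisfy Theorem~\ref{new}(i) once $n\ge 3$; for the second, I would produce, for each $k$, a family of $k$ distinct $2$-dimensional subspaces sharing a common line, so that the sum of their orthogonal complements lies in a proper subspace and Theorem~\ref{new}(i) fails.

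For the first assertion, let $H_1,H_2\in{\mathcal{G}}(n,1)$ with $H_1\neq H_2$, so $H_1^\perp,H_2^\perp\in{\mathcal{G}}(n,n-1)$ are distinct hyperplanes. Then $H_1^\perp+H_2^\perp$ properly contains the hyperplane $H_1^\perp$, hence equals $\R^n$; this is the first half of Theorem~\ref{new}(i). For the second half, the only partition of the two-element family $\{H_1^\perp,H_2^\perp\}$ into two nonempty subsets is into the two singletons, and these are mutually orthogonal exactly when $H_1^\perp\perp H_2^\perp$. But orthogonality would force $2(n-1)=\dim H_1^\perp+\dim H_2^\perp\le n$, i.e.\ $n\le 2$, contradicting $n\ge 3$. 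Thus Theorem~\ref{new}(i) holds, and Theorem~\ref{new} then yields statements (ii)--(iv).

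For the second assertion, fix $k\in\N$; note that the hypothesis $2\le\dim H_j\le n-2$ is only nonvacuous when $n\ge 4$, which I may therefore assume. Let $\ell=\lin\{e_1\}$. The $2$-dimensional subspaces of $\R^n$ containing $\ell$ are in bijection with the lines in $\ell^\perp\cong\R^{n-1}$ (via $H\mapsto H\cap\ell^\perp$, with inverse $L\mapsto \ell+L$), so there are infinitely many of them; choose $k$ distinct ones $H_1,\dots,H_k\in{\mathcal{G}}(n,2)$, each containing $\ell$. Then $H_j^\perp\subset\ell^\perp$ for every $j$, hence $H_1^\perp+\cdots+H_k^\perp\subset\ell^\perp\subsetneq\R^n$, so Theorem~\ref{new}(i) fails; by the equivalence in Theorem~\ref{new}, so do (ii)--(iv). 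Concretely, $E=\Sn\cap\ell^\perp$ is a nonempty closed proper subset of $\Sn$ satisfying $\Sn\cap(H_j^\perp+x)\subset E$ for all $j$ and all $x\in E$, which already contradicts (ii). Since $k$ was arbitrary, no finite $k$ works.

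I do not expect a substantive obstacle here: the argument is elementary linear algebra, resting on the facts that two distinct hyperplanes always span $\R^n$ and, for $n\ge 3$, can never be orthogonal, together with the existence of infinitely many $2$-planes through a fixed line. The only points needing care are the bookkeeping observations that the two assertions tacitly require $n\ge 3$ and $n\ge 4$ respectively, and that for a two-element family the non-partition condition in Theorem~\ref{new}(i) is simply the statement that the two complements are not orthogonal.
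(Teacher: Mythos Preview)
Your proof is correct and follows essentially the same route as the paper: both parts are reduced to checking condition~(i) of Theorem~\ref{new}, with the first part verified by noting that two distinct hyperplanes in $\R^n$ span $\R^n$ and cannot be mutually orthogonal when $n\ge 3$, and the second by exhibiting, for any $k$, subspaces whose orthogonal complements all lie in a common hyperplane. Your write-up is more detailed than the paper's (you spell out the dimension count ruling out orthogonality, restrict to $2$-planes through a fixed line, and give an explicit $E$ witnessing the failure of~(ii)), but the underlying argument is the same.
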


\begin{proof}
Let $H_1, H_2\in{\mathcal{G}}(n,1)$ with $H_1\neq H_2$. Since $\dim H_1^{\perp}=\dim H_2^{\perp}=n-1$ and $H_1^{\perp}\neq H_2^{\perp}$, Theorem~\ref{new}(i) holds.

Suppose that $k\in \N$. The assumption that $2\le \dim H_j\le n-2$ for $j=2,\dots,k$ allows us to choose different $H_j$'s such that $H_1^{\perp},\dots, H_k^{\perp}$ are all contained in the same $(n-1)$-dimensional subspace. The first condition in Theorem~\ref{new}(i) then fails.

With these observations in hand, the result follows from Theorem~\ref{new}.
\end{proof}

\section{Open problems}\label{problems}

\begin{prob}\label{probaug1}
For $i\in \{1,\dots,n-2\}$, find necessary and sufficient conditions for a set $\mathcal{F}$ of $i$-dimensional subspaces such that reflection symmetry with respect to each subspace in $\mathcal{F}$ implies full rotational symmetry.
\end{prob}

As was mentioned after Theorem~\ref{teo:sphericalsym_lines}, for $i=n-1$ such conditions are a consequence of the results in \cite[Proposition~4.2]{BCD} and \cite{EP}.

\begin{prob}\label{proboct16}
Let $i\in \{1,\dots,n-1\}$, and let $H_j\in {\mathcal{G}}(n,i)$, $j=1,\dots,k$, be such that reflection symmetry with respect to each $H_j$ implies full rotational symmetry.  Is it true that reflection symmetry with respect to each $H_j^{\perp}$ implies full rotational symmetry?
\end{prob}

As in Remark~\ref{remoct16}, a positive answer to Problem~\ref{proboct16} for $o$-symmetric sets follows from \eqref{reflection_wrt_perp}, but in general the latter equality does not help in any simple way.  For example, if $E$ is the set of vertices of a regular simplex $T$ inscribed in $S^2$, and $H_j\in {\mathcal{G}}(n,1)$, $j=1,\dots,4$, are the lines through the midpoints of opposite pairs of edges of $T$, then $R_{H_j}E=E$, $j=1,\dots,4$, but $R_{H_j^{\perp}}E\neq E$, $j=1,\dots,4$.

\bigskip

\end{document}